\newtheorem{theorem}{Theorem}[section]
\newtheorem{proposition}[theorem]{Proposition}
\theoremstyle{definition}
\newtheorem{definition}[theorem]{Definition}
\newtheorem{example}[theorem]{Example}
\newtheorem{remark}[theorem]{Remark}
\newcommand{\NN}{{\mathbb N}}
\newcommand{\ZZ}{{\mathbb Z}}
\newcommand{\RR}{{\mathbb R}}
\newcommand{\cF}{{\mathcal F}}
\newcommand{\cP}{{\mathcal P}}
\newcommand{\FAI}[4]{
\begin{scope}[xshift=#1cm, yshift=#2cm, rotate=#3, scale=#4]
\filldraw[color=yellow!50] (0,0) rectangle ({(1+sqrt(5))/2},{(1+sqrt(5))/2});
\draw[line width = 1pt] (0,0) -> ({(1+sqrt(5))/4},{(1+sqrt(5))/4}) -- ({(1+sqrt(5))/2},0);
\draw[line width = 1pt, ->] (0,0) -- ({(1+sqrt(5))/4},{(1+sqrt(5))/4});
\end{scope}}
\newcommand{\FAII}[4]{
\begin{scope}[xshift=#1cm, yshift=#2cm, rotate=#3, scale=#4]
\filldraw[color=yellow!50] (0,0) rectangle ({(1+sqrt(5))/2},{(1+sqrt(5))/2});
\draw[line width = 1pt] ({(1+sqrt(5))/2},0) -> ({(1+sqrt(5))/4},{(1+sqrt(5))/4}) -- ({(1+sqrt(5))/2},{(1+sqrt(5))/2});
\draw[line width = 1pt, ->] ({(1+sqrt(5))/2},0) -- ({(1+sqrt(5))/4},{(1+sqrt(5))/4});
\end{scope}}
\newcommand{\FAIII}[4]{
\begin{scope}[xshift=#1cm, yshift=#2cm, rotate=#3, scale=#4]
\filldraw[color=yellow!50] (0,0) rectangle ({(1+sqrt(5))/2},{(1+sqrt(5))/2});
\draw[line width = 1pt] ({(1+sqrt(5))/2},{(1+sqrt(5))/2}) -> ({(1+sqrt(5))/4},{(1+sqrt(5))/4}) -- (0,{(1+sqrt(5))/2});
\draw[line width = 1pt, ->] ({(1+sqrt(5))/2},{(1+sqrt(5))/2}) -- ({(1+sqrt(5))/4},{(1+sqrt(5))/4});

\end{scope}}
\newcommand{\FAIV}[4]{
\begin{scope}[xshift=#1cm, yshift=#2cm, rotate=#3, scale=#4]
\filldraw[color=yellow!50] (0,0) rectangle ({(1+sqrt(5))/2},{(1+sqrt(5))/2});;
\draw[line width = 1pt] (0,0) -- ({(1+sqrt(5))/4},{(1+sqrt(5))/4}) -- (0,{(1+sqrt(5))/2});
\draw[line width = 1pt, ->] (0,{(1+sqrt(5))/2}) -- ({(1+sqrt(5))/4},{(1+sqrt(5))/4});
\end{scope}}
\newcommand{\FARI}[4]{
\begin{scope}[xshift=#1cm, yshift=#2cm, rotate=#3, scale=#4]
\filldraw[color=gray!50] (0,0) rectangle ({(1+sqrt(5))/2},{(1+sqrt(5))/2});
\draw[line width = 1pt] (0,0) -> ({(1+sqrt(5))/4},{(1+sqrt(5))/4}) -- ({(1+sqrt(5))/2},0);
\draw[line width = 1pt, ->] ({(1+sqrt(5))/2},0) -- ({(1+sqrt(5))/4},{(1+sqrt(5))/4});
\end{scope}}
\newcommand{\FARII}[4]{
\begin{scope}[xshift=#1cm, yshift=#2cm, rotate=#3, scale=#4]
\filldraw[color=gray!50] (0,0) rectangle ({(1+sqrt(5))/2},{(1+sqrt(5))/2});
\draw[line width = 1pt] ({(1+sqrt(5))/2},0) -> ({(1+sqrt(5))/4},{(1+sqrt(5))/4}) -- ({(1+sqrt(5))/2},{(1+sqrt(5))/2});
\draw[line width = 1pt, ->] ({(1+sqrt(5))/2},{(1+sqrt(5))/2}) -- ({(1+sqrt(5))/4},{(1+sqrt(5))/4});
\end{scope}}
\newcommand{\FARIII}[4]{
\begin{scope}[xshift=#1cm, yshift=#2cm, rotate=#3, scale=#4]
\filldraw[color=gray!50] (0,0) rectangle ({(1+sqrt(5))/2},{(1+sqrt(5))/2});
\draw[line width = 1pt] ({(1+sqrt(5))/2},{(1+sqrt(5))/2}) -> ({(1+sqrt(5))/4},{(1+sqrt(5))/4}) -- (0,{(1+sqrt(5))/2});
\draw[line width = 1pt, ->] (0,{(1+sqrt(5))/2}) -- ({(1+sqrt(5))/4},{(1+sqrt(5))/4});
\end{scope}}
\newcommand{\FARIV}[4]{
\begin{scope}[xshift=#1cm, yshift=#2cm, rotate=#3, scale=#4]
\filldraw[color=gray!50] (0,0) rectangle ({(1+sqrt(5))/2},{(1+sqrt(5))/2});
\draw[line width = 1pt] (0,0) -- ({(1+sqrt(5))/4},{(1+sqrt(5))/4}) -- (0,{(1+sqrt(5))/2});
\draw[line width = 1pt, ->] (0,0) -- ({(1+sqrt(5))/4},{(1+sqrt(5))/4});
\end{scope}}
\newcommand{\FBI}[4]{
\begin{scope}[xshift=#1cm, yshift=#2cm, rotate=#3, scale=#4]
\filldraw[color=teal!50] (0,0) rectangle ({(1+sqrt(5))/2},1);
\draw[line width = 1pt] (0,0) -- ({(1+sqrt(5))/2},1);
\draw[line width = 1pt, ->] (0,0) -- ({(1+sqrt(5))/4},0.5);
\end{scope}}
\newcommand{\FBRI}[4]{
\begin{scope}[xshift=#1cm, yshift=#2cm, rotate=#3, scale=#4]
\filldraw[color=magenta!50] (0,0) rectangle ({(1+sqrt(5))/2},1);
\draw[line width = 1pt] ({(1+sqrt(5))/2},1) -- (0,0);
\draw[line width = 1pt, ->] ({(1+sqrt(5))/2},1) -- ({(1+sqrt(5))/4},0.5);
\end{scope}}
\newcommand{\FBII}[4]{
\begin{scope}[xshift=#1cm, yshift=#2cm, rotate=#3, scale=#4]
\filldraw[color=teal!50] (0,0) rectangle ({(1+sqrt(5))/2},1);
\draw[line width = 1pt] ({(1+sqrt(5))/2},0) -- (0,1);
\draw[line width = 1pt, ->] ({(1+sqrt(5))/2},0) -- ({(1+sqrt(5))/4},0.5);
\end{scope}}
\newcommand{\FBRII}[4]{
\begin{scope}[xshift=#1cm, yshift=#2cm, rotate=#3, scale=#4]
\filldraw[color=magenta!50] (0,0) rectangle ({(1+sqrt(5))/2},1);
\draw[line width = 1pt] ({(1+sqrt(5))/2},0) -- (0,1);
\draw[line width = 1pt, ->] (0,1) -- ({(1+sqrt(5))/4},0.5);
\end{scope}}
\newcommand{\FCI}[4]{
\begin{scope}[xshift=#1cm, yshift=#2cm, rotate=#3, scale=#4]
\filldraw[color=orange!50] (0,0) rectangle (1,{(1+sqrt(5))/2});
\draw[line width = 1pt] (0,{(1+sqrt(5))/2}) -- (1,0);
\draw[line width = 1pt, ->] (1,0) -- (0.5,{(1+sqrt(5))/4});

\end{scope}}
\newcommand{\FCRI}[4]{
\begin{scope}[xshift=#1cm, yshift=#2cm, rotate=#3, scale=#4]
\filldraw[color=olive!50] (0,0) rectangle (1,{(1+sqrt(5))/2});
\draw[line width = 1pt] (0,{(1+sqrt(5))/2}) -- (1,0);
\draw[line width = 1pt, ->] (0,{(1+sqrt(5))/2}) -- (0.5,{(1+sqrt(5))/4});
\end{scope}}
\newcommand{\FCII}[4]{
\begin{scope}[xshift=#1cm, yshift=#2cm, rotate=#3, scale=#4]
\filldraw[color=orange!50] (0,0) rectangle (1,{(1+sqrt(5))/2});
\draw[line width = 1pt] (0,0) -- (1,{(1+sqrt(5))/2});
\draw[line width = 1pt, ->] (0,0) -- (0.5,{(1+sqrt(5))/4});
\end{scope}}
\newcommand{\FCRII}[4]{
\begin{scope}[xshift=#1cm, yshift=#2cm, rotate=#3, scale=#4]
\filldraw[color=olive!50] (0,0) rectangle (1,{(1+sqrt(5))/2});
\draw[line width = 1pt] (0,0) -- (1,{(1+sqrt(5))/2});
\draw[line width = 1pt, ->] (1,{(1+sqrt(5))/2}) -- (0.5,{(1+sqrt(5))/4});
\end{scope}}
\newcommand{\FDI}[4]{
\begin{scope}[xshift=#1cm, yshift=#2cm, rotate=#3, scale=#4]
\filldraw[color=green!50] (0,0) rectangle (1,1);
\draw[line width = 1pt] (0,0) -> (0.5,0.5) -- (1,0);
\draw[line width = 1pt, ->] (0,0) -- (0.5,0.5);
\end{scope}}
\newcommand{\FDII}[4]{
\begin{scope}[xshift=#1cm, yshift=#2cm, rotate=#3, scale=#4]
\filldraw[color=green!50] (0,0) rectangle (1,1);

\draw[line width = 1pt] (1,0) -> (0.5,0.5) -- (1,1);
\draw[line width = 1pt, ->] (1,0) -- (0.5,0.5);
\end{scope}}
\newcommand{\FDIII}[4]{
\begin{scope}[xshift=#1cm, yshift=#2cm, rotate=#3, scale=#4]
\filldraw[color=green!50] (0,0) rectangle (1,1);
\draw[line width = 1pt] (1,1) -> (0.5,0.5) -- (0,1);
\draw[line width = 1pt, ->] (1,1) -- (0.5,0.5);

\end{scope}}
\newcommand{\FDIV}[4]{
\begin{scope}[xshift=#1cm, yshift=#2cm, rotate=#3, scale=#4]
\filldraw[color=green!50] (0,0) rectangle (1,1);
\draw[line width = 1pt] (0,0) -- (0.5,0.5) -- (0,1);
\draw[line width = 1pt, ->] (0,1) -- (0.5,0.5);
\end{scope}}
\newcommand{\FDRI}[4]{
\begin{scope}[xshift=#1cm, yshift=#2cm, rotate=#3, scale=#4]
\filldraw[color=blue!50] (0,0) rectangle (1,1);
\draw[line width = 1pt] (0,0) -> (0.5,0.5) -- (1,0);
\draw[line width = 1pt, ->] (1,0) -- (0.5,0.5);
\end{scope}}
\newcommand{\FDRII}[4]{
\begin{scope}[xshift=#1cm, yshift=#2cm, rotate=#3, scale=#4]
\filldraw[color=blue!50] (0,0) rectangle (1,1);
\draw[line width = 1pt] (1,0) -> (0.5,0.5) -- (1,1);
\draw[line width = 1pt, ->] (1,1) -- (0.5,0.5);
\end{scope}}
\newcommand{\FDRIII}[4]{
\begin{scope}[xshift=#1cm, yshift=#2cm, rotate=#3, scale=#4]
\filldraw[color=blue!50] (0,0) rectangle (1,1);
\draw[line width = 1pt] (1,1) -> (0.5,0.5) -- (0,1);
\draw[line width = 1pt, ->] (0,1) -- (0.5,0.5);
\end{scope}}
\newcommand{\FDRIV}[4]{
\begin{scope}[xshift=#1cm, yshift=#2cm, rotate=#3, scale=#4]
\filldraw[color=blue!50] (0,0) rectangle (1,1);
\draw[line width = 1pt] (0,0) -- (0.5,0.5) -- (0,1);
\draw[line width = 1pt, ->] (0,0) -- (0.5,0.5);
\end{scope}}
\newcommand{\SFAI}[4]{
\begin{scope}[xshift=#1cm, yshift=#2cm, rotate=#3, scale=#4]
\FARIV{0}{0}{0}{1}
\FBI{0}{1.61803399}{0}{1}
\FCRI{1.61803399}{0}{0}{1}
\FDIV{1.61803399}{1.61803399}{0}{1}
\end{scope}}
\newcommand{\SFAII}[4]{
\begin{scope}[xshift=#1cm, yshift=#2cm, rotate=#3, scale=#4]
\FAIII{0}{0}{0}{1}
\FBI{0}{1.61803399}{0}{1}
\FCI{1.61803399}{0}{0}{1}
\FDRIII{1.61803399}{1.61803399}{0}{1}
\end{scope}}
\newcommand{\SFAIII}[4]{
\begin{scope}[xshift=#1cm, yshift=#2cm, rotate=#3, scale=#4]
\FAII{0}{0}{0}{1}
\FBII{0}{1.61803399}{0}{1}
\FCRII{1.61803399}{0}{0}{1}
\FDRII{1.61803399}{1.61803399}{0}{1}
\end{scope}}
\newcommand{\SFAIV}[4]{
\begin{scope}[xshift=#1cm, yshift=#2cm, rotate=#3, scale=#4]
\FARI{0}{0}{0}{1}
\FBRII{0}{1.61803399}{0}{1}
\FCRII{1.61803399}{0}{0}{1}
\FDI{1.61803399}{1.61803399}{0}{1}
\end{scope}}
\newcommand{\SFARI}[4]{
\begin{scope}[xshift=#1cm, yshift=#2cm, rotate=#3, scale=#4]
\FAIV{0}{0}{0}{1}
\FBRI{0}{1.61803399}{0}{1}
\FCI{1.61803399}{0}{0}{1}
\FDRIV{1.61803399}{1.61803399}{0}{1}
\end{scope}}
\newcommand{\SFARII}[4]{
\begin{scope}[xshift=#1cm, yshift=#2cm, rotate=#3, scale=#4]
\FARIII{0}{0}{0}{1}
\FBRI{0}{1.61803399}{0}{1}
\FCRI{1.61803399}{0}{0}{1}
\FDIII{1.61803399}{1.61803399}{0}{1}
\end{scope}}
\newcommand{\SFARIII}[4]{
\begin{scope}[xshift=#1cm, yshift=#2cm, rotate=#3, scale=#4]
\FARII{0}{0}{0}{1}
\FBRII{0}{1.61803399}{0}{1}
\FCII{1.61803399}{0}{0}{1}
\FDII{1.61803399}{1.61803399}{0}{1}
\end{scope}}
\newcommand{\SFARIV}[4]{
\begin{scope}[xshift=#1cm, yshift=#2cm, rotate=#3, scale=#4]
\FAI{0}{0}{0}{1}
\FBII{0}{1.61803399}{0}{1}
\FCII{1.61803399}{0}{0}{1}
\FDRI{1.61803399}{1.61803399}{0}{1}
\end{scope}}
\newcommand{\SFBI}[4]{
\begin{scope}[xshift=#1cm, yshift=#2cm, rotate=#3, scale=#4]
\FAI{0}{0}{0}{1}
\FCII{1.61803399}{0}{0}{1}
\end{scope}}
\newcommand{\SFBRI}[4]{
\begin{scope}[xshift=#1cm, yshift=#2cm, rotate=#3, scale=#4]
\FARI{0}{0}{0}{1}
\FCRII{1.61803399}{0}{0}{1}
\end{scope}}
\newcommand{\SFBII}[4]{
\begin{scope}[xshift=#1cm, yshift=#2cm, rotate=#3, scale=#4]
\FAIII{0}{0}{0}{1}
\FCI{1.61803399}{0}{0}{1}
\end{scope}}
\newcommand{\SFBRII}[4]{
\begin{scope}[xshift=#1cm, yshift=#2cm, rotate=#3, scale=#4]
\FARIII{0}{0}{0}{1}
\FCRI{1.61803399}{0}{0}{1}
\end{scope}}
\newcommand{\SFCI}[4]{
\begin{scope}[xshift=#1cm, yshift=#2cm, rotate=#3, scale=#4]
\FAII{0}{0}{0}{1}
\FBII{0}{1.61803399}{0}{1}
\end{scope}}
\newcommand{\SFCRI}[4]{
\begin{scope}[xshift=#1cm, yshift=#2cm, rotate=#3, scale=#4]
\FARII{0}{0}{0}{1}
\FBRII{0}{1.61803399}{0}{1}
\end{scope}}
\newcommand{\SFCII}[4]{
\begin{scope}[xshift=#1cm, yshift=#2cm, rotate=#3, scale=#4]
\FARIV{0}{0}{0}{1}
\FBI{0}{1.61803399}{0}{1}
\end{scope}}
\newcommand{\SFCRII}[4]{
\begin{scope}[xshift=#1cm, yshift=#2cm, rotate=#3, scale=#4]
\FAIV{0}{0}{0}{1}
\FBRI{0}{1.61803399}{0}{1}
\end{scope}}
\newcommand{\SFDI}[4]{
\begin{scope}[xshift=#1cm, yshift=#2cm, rotate=#3, scale=#4]
\FAI{0}{0}{0}{1}
\end{scope}}
\newcommand{\SFDII}[4]{
\begin{scope}[xshift=#1cm, yshift=#2cm, rotate=#3, scale=#4]
\FAII{0}{0}{0}{1}
\end{scope}}
\newcommand{\SFDIII}[4]{
\begin{scope}[xshift=#1cm, yshift=#2cm, rotate=#3, scale=#4]
\FAIII{0}{0}{0}{1}
\end{scope}}
\newcommand{\SFDIV}[4]{
\begin{scope}[xshift=#1cm, yshift=#2cm, rotate=#3, scale=#4]
\FAIV{0}{0}{0}{1}
\end{scope}}
\newcommand{\SFDRI}[4]{
\begin{scope}[xshift=#1cm, yshift=#2cm, rotate=#3, scale=#4]
\FARI{0}{0}{0}{1}
\end{scope}}
\newcommand{\SFDRII}[4]{
\begin{scope}[xshift=#1cm, yshift=#2cm, rotate=#3, scale=#4]
\FARII{0}{0}{0}{1}
\end{scope}}
\newcommand{\SFDRIII}[4]{
\begin{scope}[xshift=#1cm, yshift=#2cm, rotate=#3, scale=#4]
\FARIII{0}{0}{0}{1}
\end{scope}}
\newcommand{\SFDRIV}[4]{
\begin{scope}[xshift=#1cm, yshift=#2cm, rotate=#3, scale=#4]
\FARIV{0}{0}{0}{1}
\end{scope}}
\newcommand{\SSFAI}[4]{
\begin{scope}[xshift=#1cm, yshift=#2cm, rotate=#3, scale=#4]
\SFARIV{0}{0}{0}{1}
\SFBI{0}{2.61803399}{0}{1}
\SFCRI{2.61803399}{0}{0}{1}
\SFDIV{2.61803399}{2.61803399}{0}{1}
\end{scope}}
\newcommand{\SSFAIV}[4]{
\begin{scope}[xshift=#1cm, yshift=#2cm, rotate=#3, scale=#4]
\SFARI{0}{0}{0}{1}
\SFBRII{0}{2.61803399}{0}{1}
\SFCRII{2.61803399}{0}{0}{1}
\SFDI{2.61803399}{2.61803399}{0}{1}
\end{scope}}
\newcommand{\SSFARII}[4]{
\begin{scope}[xshift=#1cm, yshift=#2cm, rotate=#3, scale=#4]
\SFARIII{0}{0}{0}{1}
\SFBRI{0}{2.61803399}{0}{1}
\SFCRI{2.61803399}{0}{0}{1}
\SFDIII{2.61803399}{2.61803399}{0}{1}
\end{scope}}
\newcommand{\SSFBII}[4]{
\begin{scope}[xshift=#1cm, yshift=#2cm, rotate=#3, scale=#4]
\SFAIII{0}{0}{0}{1}
\SFCI{2.61803399}{0}{0}{1}
\end{scope}}
\newcommand{\SSFBRII}[4]{
\begin{scope}[xshift=#1cm, yshift=#2cm, rotate=#3, scale=#4]
\SFARIII{0}{0}{0}{1}
\SFCRI{2.61803399}{0}{0}{1}
\end{scope}}
\newcommand{\SSFCII}[4]{
\begin{scope}[xshift=#1cm, yshift=#2cm, rotate=#3, scale=#4]
\SFARIV{0}{0}{0}{1}
\SFBI{0}{2.61803399}{0}{1}
\end{scope}}
\newcommand{\SSFDRI}[4]{
\begin{scope}[xshift=#1cm, yshift=#2cm, rotate=#3, scale=#4]
\SFARI{0}{0}{0}{1}
\end{scope}}
\newcommand{\SSSFARIV}[4]{
\begin{scope}[xshift=#1cm, yshift=#2cm, rotate=#3, scale=#4]
\SSFAI{0}{0}{0}{1}
\SSFBII{0}{4.23606798}{0}{1}
\SSFCII{4.23606798}{0}{0}{1}
\SSFDRI{4.23606798}{4.23606798}{0}{1}
\end{scope}}
\newcommand{\SSSFBI}[4]{
\begin{scope}[xshift=#1cm, yshift=#2cm, rotate=#3, scale=#4]
\SSFAI{0}{0}{0}{1}
\SSFCII{4.23606798}{0}{0}{1}
\end{scope}}
\newcommand{\SSSFCRI}[4]{
\begin{scope}[xshift=#1cm, yshift=#2cm, rotate=#3, scale=#4]
\SSFARII{0}{0}{0}{1}
\SSFBRII{0}{4.23606798}{0}{1}
\end{scope}}
\newcommand{\SSSFDIV}[4]{
\begin{scope}[xshift=#1cm, yshift=#2cm, rotate=#3, scale=#4]
\SSFAIV{0}{0}{0}{1}
\end{scope}}
\newcommand{\SSSSFAI}[4]{
\begin{scope}[xshift=#1cm, yshift=#2cm, rotate=#3, scale=#4]
\SSSFARIV{0}{0}{0}{1}
\SSSFBI{0}{6.85410197}{0}{1}
\SSSFCRI{6.85410197}{0}{0}{1}
\SSSFDIV{6.85410197}{6.85410197}{0}{1}
\end{scope}}
\newcommand{\TAI}[4]{
\begin{scope}[xshift=#1cm, yshift=#2cm, rotate=#3, scale=#4]
\filldraw[color=yellow!50] (0,0) rectangle ({(1+sqrt(5))/2},{(1+sqrt(5))/2});
\end{scope}}
\newcommand{\TAII}[4]{
\begin{scope}[xshift=#1cm, yshift=#2cm, rotate=#3, scale=#4]
\filldraw[color=yellow!50] (0,0) rectangle ({(1+sqrt(5))/2},{(1+sqrt(5))/2});
\end{scope}}
\newcommand{\TAIII}[4]{
\begin{scope}[xshift=#1cm, yshift=#2cm, rotate=#3, scale=#4]
\filldraw[color=yellow!50] (0,0) rectangle ({(1+sqrt(5))/2},{(1+sqrt(5))/2});
\end{scope}}
\newcommand{\TAIV}[4]{
\begin{scope}[xshift=#1cm, yshift=#2cm, rotate=#3, scale=#4]
\filldraw[color=yellow!50] (0,0) rectangle ({(1+sqrt(5))/2},{(1+sqrt(5))/2});;
\end{scope}}
\newcommand{\TARI}[4]{
\begin{scope}[xshift=#1cm, yshift=#2cm, rotate=#3, scale=#4]
\filldraw[color=gray!50] (0,0) rectangle ({(1+sqrt(5))/2},{(1+sqrt(5))/2});
\end{scope}}
\newcommand{\TARII}[4]{
\begin{scope}[xshift=#1cm, yshift=#2cm, rotate=#3, scale=#4]
\filldraw[color=gray!50] (0,0) rectangle ({(1+sqrt(5))/2},{(1+sqrt(5))/2});
\end{scope}}
\newcommand{\TARIII}[4]{
\begin{scope}[xshift=#1cm, yshift=#2cm, rotate=#3, scale=#4]
\filldraw[color=gray!50] (0,0) rectangle ({(1+sqrt(5))/2},{(1+sqrt(5))/2});
\end{scope}}
\newcommand{\TBRI}[4]{
\begin{scope}[xshift=#1cm, yshift=#2cm, rotate=#3, scale=#4]
\filldraw[color=magenta!50] (0,0) rectangle ({(1+sqrt(5))/2},1);
\end{scope}}
\newcommand{\TBII}[4]{
\begin{scope}[xshift=#1cm, yshift=#2cm, rotate=#3, scale=#4]
\filldraw[color=teal!50] (0,0) rectangle ({(1+sqrt(5))/2},1);
\end{scope}}
\newcommand{\TBRII}[4]{
\begin{scope}[xshift=#1cm, yshift=#2cm, rotate=#3, scale=#4]
\filldraw[color=magenta!50] (0,0) rectangle ({(1+sqrt(5))/2},1);
\end{scope}}
\newcommand{\TCI}[4]{
\begin{scope}[xshift=#1cm, yshift=#2cm, rotate=#3, scale=#4]
\filldraw[color=orange!50] (0,0) rectangle (1,{(1+sqrt(5))/2});
\end{scope}}
\newcommand{\TCRI}[4]{
\begin{scope}[xshift=#1cm, yshift=#2cm, rotate=#3, scale=#4]
\filldraw[color=olive!50] (0,0) rectangle (1,{(1+sqrt(5))/2});
\end{scope}}
\newcommand{\TCII}[4]{
\begin{scope}[xshift=#1cm, yshift=#2cm, rotate=#3, scale=#4]
\filldraw[color=orange!50] (0,0) rectangle (1,{(1+sqrt(5))/2});
\end{scope}}
\newcommand{\TCRII}[4]{
\begin{scope}[xshift=#1cm, yshift=#2cm, rotate=#3, scale=#4]
\filldraw[color=olive!50] (0,0) rectangle (1,{(1+sqrt(5))/2});
\end{scope}}
\newcommand{\TDII}[4]{
\begin{scope}[xshift=#1cm, yshift=#2cm, rotate=#3, scale=#4]
\filldraw[color=green!50] (0,0) rectangle (1,1);
\end{scope}}
\newcommand{\TDRI}[4]{
\begin{scope}[xshift=#1cm, yshift=#2cm, rotate=#3, scale=#4]
\filldraw[color=blue!50] (0,0) rectangle (1,1);
\end{scope}}
\newcommand{\TDRII}[4]{
\begin{scope}[xshift=#1cm, yshift=#2cm, rotate=#3, scale=#4]
\filldraw[color=blue!50] (0,0) rectangle (1,1);
\end{scope}}
\newcommand{\TDRIV}[4]{
\begin{scope}[xshift=#1cm, yshift=#2cm, rotate=#3, scale=#4]
\filldraw[color=blue!50] (0,0) rectangle (1,1);
\end{scope}}
\newcommand{\STAIII}[4]{
\begin{scope}[xshift=#1cm, yshift=#2cm, rotate=#3, scale=#4]
\TAII{0}{0}{0}{1}
\TBII{0}{1.61803399}{0}{1}
\TCRII{1.61803399}{0}{0}{1}
\TDRII{1.61803399}{1.61803399}{0}{1}
\end{scope}}
\newcommand{\STARI}[4]{
\begin{scope}[xshift=#1cm, yshift=#2cm, rotate=#3, scale=#4]
\TAIV{0}{0}{0}{1}
\TBRI{0}{1.61803399}{0}{1}
\TCI{1.61803399}{0}{0}{1}
\TDRIV{1.61803399}{1.61803399}{0}{1}
\end{scope}}
\newcommand{\STARIII}[4]{
\begin{scope}[xshift=#1cm, yshift=#2cm, rotate=#3, scale=#4]
\TARII{0}{0}{0}{1}
\TBRII{0}{1.61803399}{0}{1}
\TCII{1.61803399}{0}{0}{1}
\TDII{1.61803399}{1.61803399}{0}{1}
\end{scope}}
\newcommand{\STARIV}[4]{
\begin{scope}[xshift=#1cm, yshift=#2cm, rotate=#3, scale=#4]
\TAI{0}{0}{0}{1}
\TBII{0}{1.61803399}{0}{1}
\TCII{1.61803399}{0}{0}{1}
\TDRI{1.61803399}{1.61803399}{0}{1}
\end{scope}}
\newcommand{\STBI}[4]{
\begin{scope}[xshift=#1cm, yshift=#2cm, rotate=#3, scale=#4]
\TAI{0}{0}{0}{1}
\TCII{1.61803399}{0}{0}{1}
\end{scope}}
\newcommand{\STBRI}[4]{
\begin{scope}[xshift=#1cm, yshift=#2cm, rotate=#3, scale=#4]
\TARI{0}{0}{0}{1}
\TCRII{1.61803399}{0}{0}{1}
\end{scope}}
\newcommand{\STBRII}[4]{
\begin{scope}[xshift=#1cm, yshift=#2cm, rotate=#3, scale=#4]
\TARIII{0}{0}{0}{1}
\TCRI{1.61803399}{0}{0}{1}
\end{scope}}
\newcommand{\STCI}[4]{
\begin{scope}[xshift=#1cm, yshift=#2cm, rotate=#3, scale=#4]
\TAII{0}{0}{0}{1}
\TBII{0}{1.61803399}{0}{1}
\end{scope}}
\newcommand{\STCRI}[4]{
\begin{scope}[xshift=#1cm, yshift=#2cm, rotate=#3, scale=#4]
\TARII{0}{0}{0}{1}
\TBRII{0}{1.61803399}{0}{1}
\end{scope}}
\newcommand{\STCRII}[4]{
\begin{scope}[xshift=#1cm, yshift=#2cm, rotate=#3, scale=#4]
\TAIV{0}{0}{0}{1}
\TBRI{0}{1.61803399}{0}{1}
\end{scope}}
\newcommand{\STDI}[4]{
\begin{scope}[xshift=#1cm, yshift=#2cm, rotate=#3, scale=#4]
\TAI{0}{0}{0}{1}
\end{scope}}
\newcommand{\STDIII}[4]{
\begin{scope}[xshift=#1cm, yshift=#2cm, rotate=#3, scale=#4]
\TAIII{0}{0}{0}{1}
\end{scope}}
\newcommand{\STDIV}[4]{
\begin{scope}[xshift=#1cm, yshift=#2cm, rotate=#3, scale=#4]
\TAIV{0}{0}{0}{1}
\end{scope}}
\newcommand{\SSTAI}[4]{
\begin{scope}[xshift=#1cm, yshift=#2cm, rotate=#3, scale=#4]
\STARIV{0}{0}{0}{1}
\STBI{0}{2.61803399}{0}{1}
\STCRI{2.61803399}{0}{0}{1}
\STDIV{2.61803399}{2.61803399}{0}{1}
\end{scope}}
\newcommand{\SSTAIV}[4]{
\begin{scope}[xshift=#1cm, yshift=#2cm, rotate=#3, scale=#4]
\STARI{0}{0}{0}{1}
\STBRII{0}{2.61803399}{0}{1}
\STCRII{2.61803399}{0}{0}{1}
\STDI{2.61803399}{2.61803399}{0}{1}
\end{scope}}
\newcommand{\SSTARII}[4]{
\begin{scope}[xshift=#1cm, yshift=#2cm, rotate=#3, scale=#4]
\STARIII{0}{0}{0}{1}
\STBRI{0}{2.61803399}{0}{1}
\STCRI{2.61803399}{0}{0}{1}
\STDIII{2.61803399}{2.61803399}{0}{1}
\end{scope}}
\newcommand{\SSTBII}[4]{
\begin{scope}[xshift=#1cm, yshift=#2cm, rotate=#3, scale=#4]
\STAIII{0}{0}{0}{1}
\STCI{2.61803399}{0}{0}{1}
\end{scope}}
\newcommand{\SSTBRII}[4]{
\begin{scope}[xshift=#1cm, yshift=#2cm, rotate=#3, scale=#4]
\STARIII{0}{0}{0}{1}
\STCRI{2.61803399}{0}{0}{1}
\end{scope}}
\newcommand{\SSTCII}[4]{
\begin{scope}[xshift=#1cm, yshift=#2cm, rotate=#3, scale=#4]
\STARIV{0}{0}{0}{1}
\STBI{0}{2.61803399}{0}{1}
\end{scope}}
\newcommand{\SSTDRI}[4]{
\begin{scope}[xshift=#1cm, yshift=#2cm, rotate=#3, scale=#4]
\STARI{0}{0}{0}{1}
\end{scope}}
\newcommand{\SSSTARIV}[4]{
\begin{scope}[xshift=#1cm, yshift=#2cm, rotate=#3, scale=#4]
\SSTAI{0}{0}{0}{1}
\SSTBII{0}{4.23606798}{0}{1}
\SSTCII{4.23606798}{0}{0}{1}
\SSTDRI{4.23606798}{4.23606798}{0}{1}
\end{scope}}
\newcommand{\SSSTBI}[4]{
\begin{scope}[xshift=#1cm, yshift=#2cm, rotate=#3, scale=#4]
\SSTAI{0}{0}{0}{1}
\SSTCII{4.23606798}{0}{0}{1}
\end{scope}}
\newcommand{\SSSTCRI}[4]{
\begin{scope}[xshift=#1cm, yshift=#2cm, rotate=#3, scale=#4]
\SSTARII{0}{0}{0}{1}
\SSTBRII{0}{4.23606798}{0}{1}
\end{scope}}
\newcommand{\SSSTDIV}[4]{
\begin{scope}[xshift=#1cm, yshift=#2cm, rotate=#3, scale=#4]
\SSTAIV{0}{0}{0}{1}
\end{scope}}
\newcommand{\SSSSTAI}[4]{
\begin{scope}[xshift=#1cm, yshift=#2cm, rotate=#3, scale=#4]
\SSSTARIV{0}{0}{0}{1}
\SSSTBI{0}{6.85410197}{0}{1}
\SSSTCRI{6.85410197}{0}{0}{1}
\SSSTDIV{6.85410197}{6.85410197}{0}{1}
\end{scope}}
\newcommand{\hc}[4]{
\begin{scope}[xshift=#1cm, yshift=#2cm, rotate=#3, scale=#4]
\draw[thick] (0,0) rectangle (1,1);
\draw [line width=2pt] (0,0) -- (0.25,0.25)-- (0.25,0.75) -- (0.75,0.75) -- (0.75,0.25) -- (1,0);
\draw[line width=2pt, ->] (0.25,0.75) -- (0.5,0.75);
\end{scope}}
\newcommand{\hcr}[4]{
\begin{scope}[xshift=#1cm, yshift=#2cm, rotate=#3, scale=#4]
\draw[thick] (0,0) rectangle (1,1);
\draw [line width=2pt] (0,0) -- (0.25,0.25)-- (0.25,0.75) -- (0.75,0.75) -- (0.75,0.25) -- (1,0);
\draw[line width=2pt, ->] (0.75,0.75) -- (0.5,0.75);
\end{scope}}
\newcommand{\shc}[4]{
\begin{scope}[xshift=#1cm, yshift=#2cm, rotate=#3, scale=#4]
\hc{0}{1}{0}{1};
\hc{1}{1}{0}{1};
\hcr{0}{1}{-90}{1};
\hcr{2}{0}{90}{1};
\end{scope}}
\newcommand{\shcr}[4]{
\begin{scope}[xshift=#1cm, yshift=#2cm, rotate=#3, scale=#4]
\hcr{0}{1}{0}{1};
\hcr{1}{1}{0}{1};
\hc{0}{1}{-90}{1};
\hc{2}{0}{90}{1};
\end{scope}}
\newcommand{\sshc}[4]{
\begin{scope}[xshift=#1cm, yshift=#2cm, rotate=#3, scale=#4]
\shc{0}{2}{0}{1};
\shc{2}{2}{0}{1};
\shcr{0}{2}{-90}{1};
\shcr{4}{0}{90}{1};
\end{scope}}
\newcommand{\sss}{{\mathrm{supp\ }}}
\@date \else {\vskip3ex \centering\footnotesize\@date\par\vskip1ex}\fi
\else \@footnotetext{\@setdate}\fi}
\newcommand{\Addresses}{{
  \bigskip
  
  \textsc{MRC Human Genetics Unit, Institute of Genetics and Cancer, University of Edinburgh, Edinburgh, UK}\par\nopagebreak
  \textit{E-mail address:} \texttt{iozkarac@ed.ac.uk}

}}
\tikzset{
    buffer/.style={
        draw,
        shape border rotate=-90,
        isosceles triangle,
        isosceles triangle apex angle=60,
        fill=red,
        node distance=2cm,
        minimum height=4em
    }
}
\title{The Fibonacci Space-Filling Curve}
\author{Mustafa \.{I}smail \"{O}zkaraca}
\date{\today} 
\begin{document}

\maketitle

\begin{abstract}
Can you stretch and reform a curve such that it fills a square completely? This question dates back to 18th century, the origin of space-filling curves. It was proved affirmatively by many great mathematicians. In this document, we reconsider the problem and present a different proof using Cartesian product of Fibonacci substitution with itself. Our construction differs from other curves by design. 
\end{abstract}
\noindent
\section{Introduction}

Can you map a finite line segment onto a square continuously? It is a question that many would probably have not answered affirmatively before late 1800s, the origin of theory of space-filling curves. 

Cantor proved in 1878 that there exists a bijective map between any pair of finite dimensional smooth manifolds \cite{b_cantor}. This result was groundbreaking from many perspectives. For example, a simple consequence of Cantor's result indicates that there is a bijection between an interval (a one-dimensional object) and a square (a two-dimensional object). There was no indication about continuity in Cantor's finding. As such, his finding initiated a hunt to search for continuous bijections between such manifolds, in which existence of such a map would imply a curve, which is a continuous image of an interval, can be fitted into a square bijectively. 

Cantor's result was improved by Netto a year later, where he showed in 1879 that such a map is necessarily discontinuous \cite{b_netto}. Netto's finding directed researchers into looking for continuous surjections. Today's theory of space-filling curves was born by construction of such continuous surjections. 

Peano was the first to settle existence of such a map about a decade later, in 1890 \cite{b_peano}. He constructed a continuous map from the unit interval $[0,1]$ onto the unit square $[0,1]\times[0,1]$. In fact, Peano's mapping was a curve that maps onto the unit square, which is therefore called a \emph{space-filling\ curve}. It is also referred to as \emph{Peano's\ curve} in some sources for acknowledging the initial discoverer Peano. 

Although, Peano's mapping was a curve, the geometry of his curve and how it is filling the square was not apparent since he defined his map analytically, using ternary representations. Indeed, there was no hint of a geometric construction in his proof. The following year, 1891, was the year the theory of space-filling curves stepped into the lands of geometry irrevocably, when Hilbert discovered another such curve based on an elegant geometric construction. 

Hilbert presented a self-similar iterative system referred to as \emph{Hilbert's\ geometric\ construction}, in which a unit square with a curve attached on it is partitioned into four smaller squares, each of which is a scaled replica of the initial square (with the attached curve), and can be concatenated in an ordered fashion, and each smaller squares subsequently partitioned into even smaller squares iteratively by the same token, ad infinitum (Figure \ref{f_Hilbert_SFC_iterations}). This process induces a sequence of curves attached on the unit square that converges to a space-filling curve \cite{sagan}. It should be noted that there are several different equivalent ways to define Hilbert's geometric construction \cite{sagan, b_ozkaraca}. 

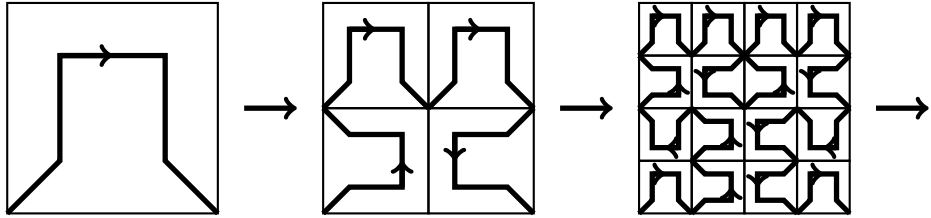
\begin{figure}[H]
\centering
\begin{tikzpicture}[scale=0.7]
\centering
\hc{-9}{0}{0}{4};
\draw [line width=2pt,  ->] (-4.5,2)-- (-3.5,2);
\draw [line width=2pt,  ->] (1.5,2)-- (2.5,2);
\draw [line width=2pt,  ->] (7.5,2)-- (8.5,2);
\shc{-3}{0}{0}{2};
\sshc{3}{0}{0}{1};
\end{tikzpicture}
\caption{A version of Hilbert's geometric construction. A self-similar iterative system induces a sequence of curves that converges to Hilbert's space-filling curve.}
\label{f_Hilbert_SFC_iterations}
\end{figure}

Hilbert's construction answers the question at the beginning affirmatively. In particular, Hilbert's curve can be demonstrated through infinite steps of stretches and reforms of a curve such that it fills the unit square, as follows. Imagine a gum on the shape of the unit interval $[0,1]$ is placed at the bottom of the unit square $[0,1]\times[0,1]$. In the first step, glue the end points of the gum (i.e. $0$ and $1$) to the bottom corners of the square (i.e. $(0,0)$ and $(1,0)$) respectively, and stretch the gum (curve) to make the shape on the leftmost curve in Figure \ref{f_Hilbert_SFC_iterations}. Next, pin three points over the gum such that it is dissected into four equal parts. Glue those three points over the gum to the points $(0,1/2)$, $(1/2,1/2)$ and $(1,1/2)$ over the square, respectively, and stretch every dissected part of the gum to make the shape on the middle curve in the figure. Note that glued points in each step are stationary for the following iterative steps of Hilbert's construction. Continuing stretching and reforming the gum as such leads to Hilbert's curve after infinite steps. Precisely, Hilbert's curve can be represented as a function $H:[0,1]\mapsto[0,1]\times[0,1]$ defined by
\[
H(x) = \lim\limits_{k\to\infty}h_k(x)\quad\mathrm{for\ } x\in[0,1],
\]
where $h_1$ is the curve shown on the left of Figure \ref{f_Hilbert_SFC_iterations}, and $h_k$'s for $k>1$ are curves generated at the iteration steps of Hilbert's construction which are formed by concatenation of curves constructed on the previous step (for example, $h_2$ is formed by concatenation of four curves that are scaled copies of $h_1$, and $h_3$ is formed by concatenation of four curves that are scaled copies of $h_2$ as shown in Figure \ref{f_Hilbert_SFC_iterations}).

More examples were given shortly after Hilbert, based on the same geometric idea of his, such as Moore in 1900 \cite{b_moore}, Sierpinski in 1912 \cite{b_sierpinski} and Polya in 1913 \cite{b_polya}. Those space-filling curve examples were formed by dissecting associated regions with attached curves into smaller replicas so that associated curves can be concatenated in an ordered fashion just like in Hilbert's geometric construction. It is also noteworthy to add that Osgood discovered in 1903 a continuous injection from the unit interval to a subset of the plane whose range has positive Lebesgue measure \cite{b_osgood}. His construction was also based on an iterative geometric construction. These maps are called \emph{Osgood\ curves}.

There are also other important space-filling curves that arise from different techniques, such as Lebesgue's space-filling curve given in 1904 \cite{b_lebesgue} and Schoenberg's space-filling curve given in 1938 \cite{b_schoenberg}. Those curves are constructed based on the idea of interpolation over the map $\pi:\Gamma\mapsto [0,1]\times [0,1]$ defined by 
\begin{equation*}
    \pi\left((0._3\ (2\cdot x_1)(2\cdot x_2)(2\cdot x_3)\dots)\right) = \begin{bmatrix}
(0._2\ x_1 x_3 x_5\dots,)  \\
(0._2\ x_2 x_4 x_6\dots,) 
\end{bmatrix},
\end{equation*}
where $\Gamma$ is the Cantor set and $._3$, $._2$ denote ternary and binary representations, respectively. A detailed geometric analysis of those curves can be found in \cite{Journal_Sagan_1, Journal_Sagan_2, sagan}.

The idea of interpolation over a continuous surjection from the Cantor set became a milestone for the characterisation of space-filling curves. It was proven by Hahn that interpolation approach can be generalised over any given connected locally connected compact subsets of any multi dimensional Euclidean space.  In particular, Hahn showed that any compact connected and locally connected set in any Euclidean space (of any dimension) gives rise to a space-filling curve \cite{b_hahn_3}. Because every space-filling curve is compact connected and locally connected, Hahn characterised space-filling curves in Euclidean spaces of arbitrary dimension. Hahn's idea was inspired from a result in Hausdorff's book \cite{b_hausdorff_felix}, which states that every compact set in an Euclidean space of arbitrary dimension is a continuous image of the Cantor set (i.e. there exists a continuous surjection that maps the Cantor set onto the given compact set). 

This proof was Hahn's second proof for such a characterisation of space-filling curves. Hahn's first proof \cite{b_hahn_1,b_hahn_2} was around the same time when Mazurkiewicz, independently arrived the same conclusion of such characterisation \cite{b_Mazurkiewicz_1, b_Mazurkiewicz_2, b_Mazurkiewicz_3}. The characterisation is therefore usually referred as \emph{Hahn-Mazurkiewicz\ Theorem} in the literature. A proof of this characterisation using the idea of Hahn's second proof, can be found in \cite[Theorem 6.8]{sagan} or in \cite[Theorem 31.5]{b_willard_top_book}. 
 
Hahn's program can be regarded as an algorithm to construct a space-filling curve. It is based on an interpolation argument. There are also other important methods which can be regarded as algorithms for constructing space-filling curves, such as the L-system method introduced by Lindenmayer \cite{b_lindenmayer}, the recurrent set method defined by Dekking \cite{b_dekking}, the edge-to-trail substitution method developed by X. R. Dai, H. Rao and S. Q. Zhang in subsequent papers \cite{b_rao_zang_1, b_rao_zang_2, b_rao_zang_3} and a generalisation of Lebesgue's interpolation method defined by {\"O}zkaraca \cite{ozkaraca_Lebesgue}. The L-system method is an algorithm to construct self-similar sets by a set of rules that are formed at the first step, together with an alphabet, in which the rules are going to be applied. The recurrent set method can be thought of as an improvement to the L-system method. These algorithms start with a (one dimensional) substitution structure. They define a space-filling curves alike structure by iterating the substitutive structure ad infinitum. The edge-to-trail substitution method, on the other hand, generates space filing curves for self-similar sets satisfying mild conditions. Space-filling curves are being formed by constructing a substitution rule via the geometric structure of given self-similar sets. Lastly, {\"O}zkaraca's method generalises interpolation argument of Lebesgue's for finite substitutions satisfying mild conditions. In this paper, we generate a space-filling curve using Fibonacci substitution with the help of Hilbert's geometric construction.

\section{Substitutions}
In this section we borrow some terminology from theory of tiling spaces.

\subsection{Substitution Rules}
Most of the material in this section is classical and adopted from textbooks such as \cite{sadun2008topology}. 

Let $\RR^n$ denote the n-dimensional Euclidean space for $n\in\ZZ^+$. We define the following:
\begin{enumerate}
    \item A \emph{tile} $t$ is a subset of $\RR^n$ which is homeomorphic to the closed unit ball. We further add a label $l(t)$ to $t$ that differentiates $t$ from any other identical sets. Examples of commonly used labels are letters, colors or both.
    
    The associated set of $t$ is called the \emph{support} of $t$ and is denoted by $\sss t$. An example of a tile in $\RR^2$ is given in Figure \ref{Figure_Basic_Substitution_Definitions_1}. 
    \item A \emph{patch} $P$ (in $\RR^n$) is a finite collection of tiles (in $\RR^n$) so that
\begin{enumerate}
\item[(i)] $\bigcup\limits_{p\in P}\sss p$ is homeomorphic to the closed unit ball,
\item[(ii)] $\mathrm{int\ }(\sss p)\cap\mathrm{int\ }(\sss q)=\emptyset$ for every distinct pair $p,q\in P$. 
\end{enumerate}
The set $\bigcup\limits_{p\in P}\sss p$ is called the \emph{support\ of} $P$. That is, support of a patch is the union of supports of tiles it consists of. An example of a patch in $\RR^2$ is given in Figure \ref{Figure_Basic_Substitution_Definitions_2}. 
\item Scaled and/or translated copies of a tile (or a patch) is also a tile (or a patch). In particular, for a given tile $t$, a patch $P$, a vector $x\in\RR^n$ and a non-zero scalar $\lambda\neq0$, we create new tiles $t+x$, $\lambda t$ and new patches $P+x$, $\lambda P$ by the following relations:
\begin{itemize}
    \item[(i)] $\sss (t+x)=(\sss t) + x$ and $l(t+x)=l(t)$. The tile $t+x$ is called a \emph{translation} of $t$.
    \item[(ii)] $\sss (\lambda t)=\lambda \sss t$ and $l(\lambda t)=l(t)$. The tile $\lambda t$ is called a \emph{scaled\ copy} of $t$.
    \item[(iii)] $P+x=\{p+x:\ p\in P\}$. The patch $P+x$ is called a \emph{translation} of $P$.
    \item[(iv)] $\lambda P=\{\lambda p:\ p\in P\}$. The patch $\lambda P$ is called a \emph{scaled\ copy} of $P$.   
\end{itemize}
Examples of scaled and translated copies of a tile and a patch in $\RR^2$ is given in Figure \ref{Figure_Basic_Substitution_Definitions_3}.
\item Let $t$ be a given tile. A simple curve $e_t$ with end points $x,y$ for some distinct $x,y\in V_t$, where $V_t$ is the collection of vertices of $\sss t$, is called a \emph{decoration} for $t$. For a given tile $t$ and a decoration $e_t$ for $t$, we create a new tile $t^\prime$ by relations $\sss t^\prime=\sss t$ and $l(t^\prime)=(l(t), e_t)$. The constructed tile $t^\prime$ is called a \emph{decorated\ tile} (of $t$). An example of a decorated tile is given in Figure \ref{Figure_Basic_Substitution_Definitions_4}.
\end{enumerate}

\begin{figure}[ht]
\centering
\begin{subfigure}[b]{0.45\textwidth}
\centering
\begin{tikzpicture}
\draw[dashed] (0,-1) -- (0,2);
\draw[dashed] (-1,0) -- (2,0);
\draw (0,0) rectangle (1,1);
\filldraw[black] (0,0) circle (2pt) node[anchor=north east] {$(0,0)$};
\filldraw[black] (1,0) circle (2pt) node[anchor=north west] {$(1,0)$};
\filldraw[black] (0,1) circle (2pt) node[anchor=south east] {$(0,1)$};
\node at (0.5, 0.5) {$a$};
\end{tikzpicture} 
\caption{A tile $t$ with $\sss t=[0,1]\times [0,1]$ and $l(t)=a$.}
\label{Figure_Basic_Substitution_Definitions_1}
\end{subfigure}
\hfill
\begin{subfigure}[b]{0.45\textwidth}
\centering
\begin{tikzpicture}
\draw[dashed] (0,-1) -- (0,3);
\draw[dashed] (-1,0) -- (3,0);
\draw[step=1] (0,0) grid (2,2);
\filldraw[black] (0,0) circle (2pt) node[anchor=north east] {$(0,0)$};
\filldraw[black] (1,0) circle (2pt) node[anchor=north] {$(1,0)$};
\filldraw[black] (2,0) circle (2pt) node[anchor=north] {$(2,0)$};
\filldraw[black] (0,1) circle (2pt) node[anchor=east] {$(0,1)$};
\filldraw[black] (0,2) circle (2pt) node[anchor=east] {$(0,2)$};
\node at (0.5, 0.5) {$a$};
\node at (1.5, 0.5) {$b$};
\node at (0.5, 1.5) {$c$};
\node at (1.5, 1.5) {$d$};
\end{tikzpicture} 
\caption{A patch $P$ with $\sss P=[0,2]\times [0,2]$. It consists of $4$ square tiles with labels $a,b,c,d$.}
\label{Figure_Basic_Substitution_Definitions_2}
\end{subfigure}
\vskip\baselineskip
\begin{subfigure}[b]{0.45\textwidth}
\centering
\begin{tikzpicture}
\draw[dashed] (0,-1) -- (0,3);
\draw[dashed] (-3,0) -- (4,0);
\draw (-1,0) rectangle (-2,1);
\filldraw[black] (0,0) circle (2pt);
\filldraw[black] (-1,0) circle (2pt) node[anchor=north] {$(-2,0)$};
\filldraw[black] (-2,0) circle (2pt) node[anchor=north east] {$(-4,0)$};
\filldraw[black] (-2,1) circle (2pt) node[anchor=south east] {$(-4,2)$};
\filldraw[black] (1,0) circle (2pt) node[anchor=north] {$(2,0)$};
\filldraw[black] (2,0) circle (2pt) node[anchor=north] {$(4,0)$};
\filldraw[black] (3,0) circle (2pt) node[anchor=north] {$(6,0)$};
\filldraw[black] (1,1) circle (2pt) node[anchor=east] {$(2,2)$};
\filldraw[black] (1,2) circle (2pt) node[anchor=east] {$(2,4)$};
\node at (-1.5, 0.5) {$a$};

\draw[step=1] (1,0) grid (3,2);
\node at (1.5, 0.5) {$a$};
\node at (2.5, 0.5) {$b$};
\node at (1.5, 1.5) {$c$};
\node at (2.5, 1.5) {$d$};
\end{tikzpicture}
\caption{Tile $2t+(-4,0)$ and patch $2P+(2,0)$ for the tile $t$ and patch $P$ in Figure \ref{Figure_Basic_Substitution_Definitions_1} and Figure \ref{Figure_Basic_Substitution_Definitions_2} respectively. Figure is scaled for demonstration purposes.}
\label{Figure_Basic_Substitution_Definitions_3}
\end{subfigure}
\hfill
\begin{subfigure}[b]{0.45\textwidth}
\centering
\begin{tikzpicture}
\draw[dashed] (0,-1) -- (0,3);
\draw[dashed] (-1,0) -- (3,0);
\draw[step=1] (0,0) rectangle (2,2);
\filldraw[black] (0,0) circle (2pt) node[anchor=north east] {$(0,0)$};
\filldraw[black] (2,0) circle (2pt) node[anchor=north] {$(1,0)$};
\filldraw[black] (0,2) circle (2pt) node[anchor=east] {$(0,1)$};
\node at (1, 1.7) {$a$};
\draw[->, very thick] (0,0) -- (1,1);
\draw[very thick] (2,0) -- (1,1);

\end{tikzpicture}
\caption{A decorated tile $t^\prime$ (of $t$) for the tile $t$ in Figure \ref{Figure_Basic_Substitution_Definitions_1}. It has label $l(t^\prime)=(a,e_t)$ where $e_t$ is the attached curve with end points $(0,0)$ and $(1,0)$. Figure is scaled for demonstration purposes.}
\label{Figure_Basic_Substitution_Definitions_4}
\end{subfigure}
\caption{Tiling terminologies}
\label{Figure_Basic_Substitution_Definitions}
\end{figure}
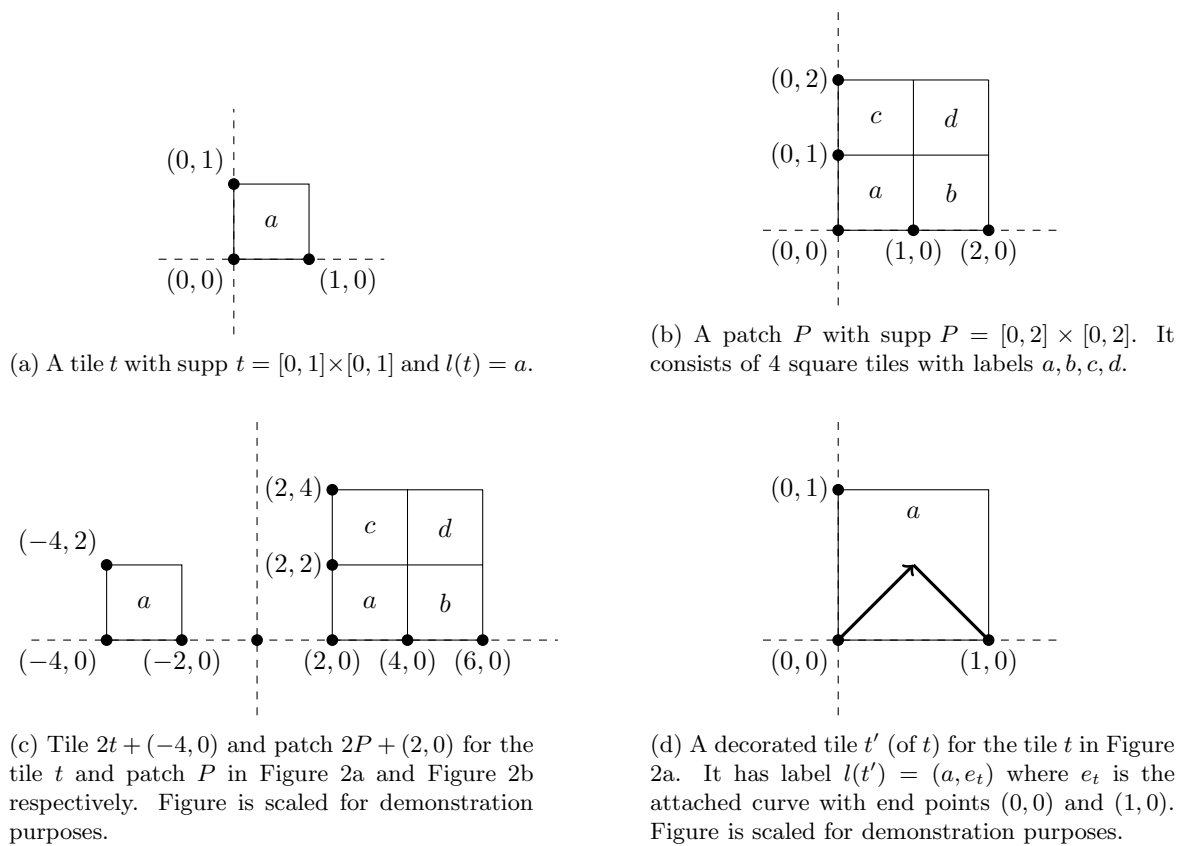

\begin{definition} 
Let $\cP$ be a given finite collection of tiles (in $\RR^n$) and $\cP^*$ denote the set of all patches which are collection of tiles, each of which is a translation of a tile in $\cP$. A function $\mu:\cP\mapsto\cP^*$ is called a \emph{substitution} (in $\RR^n$) if there exists a scalar $\lambda>1$ such that $\sss \mu(p)=\lambda\cdot\sss \mu(p)$ for each $p\in\cP$.

The scalar $\lambda$ is called the \emph{expansion\ factor} of $\mu$.
\end{definition}

\begin{remark}[Powers of substitutions]
Note that, in general, domain and range of a substitution are mutually exclusive. That is, $\mu^2=\mu\circ\mu$ may be ill-defined. In order to address this issue we extend a given substitution $\mu:\cP\mapsto\cP^*$ to $\mu^\prime:\cP+\RR^n\mapsto\cP^*$ by the relation
\[
\mu^\prime(p+x)=\mu(p)+\lambda\cdot x\quad\text{for}\ p\in\cP\ \text{and}\ x\in\RR^n.
\]
As such, we define $(\mu^\prime)^k(p)$ for $p\in\cP$ and $k\geq 2$ recursively as follows:
\[
{\displaystyle (\mu^\prime)^k(p):=\bigcup\limits_{t\in(\mu^\prime)^{k-1}(p)}\mu^\prime(t).}
\]
Observe that $(\mu^\prime)^k$ for each $k\in\ZZ^+$ is well-defined. For simplicity, we refer to $(\mu^\prime)^k$ as $\mu^k$ throughout the document.
\end{remark}

\begin{definition}
Let $\mu:\cP\mapsto\cP^*$ be a given substitution. The patch $\mu^k(p)$ for $p\in\cP$ and $k\in\NN$ is called a \emph{k-supertile} of $\mu$, with the convention that $\mu^0(p):=p$.
\end{definition}

\begin{example}[Fibonacci Substitution]
Consider the substitution $\mu_1$ (in $\RR$) shown in Figure \ref{Figure_Fibonacci_Substitution}. It is defined over two intervals $[0,\phi]$ and $[0,1]$ with labels $A,B$, respectively, where $\phi = (1+\sqrt{5})/2$ is the golden mean. The expansion factor for $\mu_1$ is $\phi$. The substitution $\mu_1$ is called the \emph{Fibonacci\ substitution}.
\end{example}
\begin{example}[Cartesian Product of Fibonacci Substitutions]
The substitution $\mu_2$ (in $\RR^2$) shown in Figure \ref{Figure_Fibonacci_times_Fibonacci_Substitution} is generated by taking the Cartesian product of two Fibonacci substitutions. It is defined over four rectangle tiles with labels $A,B,C,D$. The expansion factor for $\mu_2$ is also $\phi$. The substitution $\mu_2$ is called the \emph{Cartesian\ product\ of\ Fibonacci\ substitutions}.
\end{example}

\begin{figure}[ht]
\centering
\begin{tikzpicture}
\draw (0,0) -- (1.6180339887498948482045868343656381177203091798057628621354486227,0);
\draw[very thick, ->] (2,0) -- (2.72,0);
\node at (2.36,0.25) {$\mu_1$};

\draw (3,0) -- (5.6180339887498948482045868343656381177203091798057628621354486227,0);
\draw (0,-0.2) -- (0,0.2);
\draw (1.6180339887498948482045868343656381177203091798057628621354486227,-0.2) -- (1.6180339887498948482045868343656381177203091798057628621354486227,0.2);
\draw (3,-0.2) -- (3,0.2);
\draw (5.6180339887498948482045868343656381177203091798057628621354486227,0.2) -- (5.6180339887498948482045868343656381177203091798057628621354486227,-0.2);
\draw (4.6180339887498948482045868343656381177203091798057628621354486227,0.2) -- (4.6180339887498948482045868343656381177203091798057628621354486227,-0.2);

\node[anchor=north] at (0,-0.2) {$0$};
\node[anchor=north] at (1.6180339887498948482045868343656381177203091798057628621354486227,-0.1) 
{$\phi$};

\node[anchor=north] at (3,-0.2) {$0$};
\node[anchor=north] at (4.6180339887498948482045868343656381177203091798057628621354486227,-0.1) 
{$\phi$};
\node[anchor=north] at (5.6180339887498948482045868343656381177203091798057628621354486227,-0.1) 
{$1+\phi$};

\node at (0.8090169943749474241022934171828190588601545899028814310677243113,0.3) {$A$};
\node at (3.8090169943749474241022934171828190588601545899028814310677243113,0.3) {$A$};
\node at (0.5+4.6180339887498948482045868343656381177203091798057628621354486227,0.3) {$B$};

\draw (7+2,0) -- (8+2,0);
\draw[very thick, ->] (8.25+2,0) -- (8.75+2,0);
\node at (8.5+2,0.25) {$\mu_1$};

\draw (9+2,0) -- (10.6180339887498948482045868343656381177203091798057628621354486227+2,0);
\draw (9+2,-0.2) -- (9+2,0.2);
\draw (10.6180339887498948482045868343656381177203091798057628621354486227+2,0.2) --(10.6180339887498948482045868343656381177203091798057628621354486227+2,-0.2);
\draw (7+2,0.2) -- (7+2,-0.2);
\draw (8+2,0.2) -- (8+2,-0.2);

\node[anchor=north] at (7+2,-0.2) {$0$};
\node[anchor=north] at (8+2,-0.2) {$1$};
\node at (7.5+2,0.3) {$B$};
\node at (9.8090169943749474241022934171828190588601545899028814310677243113+2,0.3) {$A$};

\node[anchor=north] at (9+2,-0.2) {$0$};
\node[anchor=north] at (10.6180339887498948482045868343656381177203091798057628621354486227+2,-0.1) 
{$\phi$};

\end{tikzpicture}    
\caption{The Fibonacci substitution $\mu_1$.}
\label{Figure_Fibonacci_Substitution}
\end{figure}
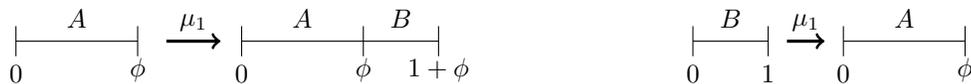

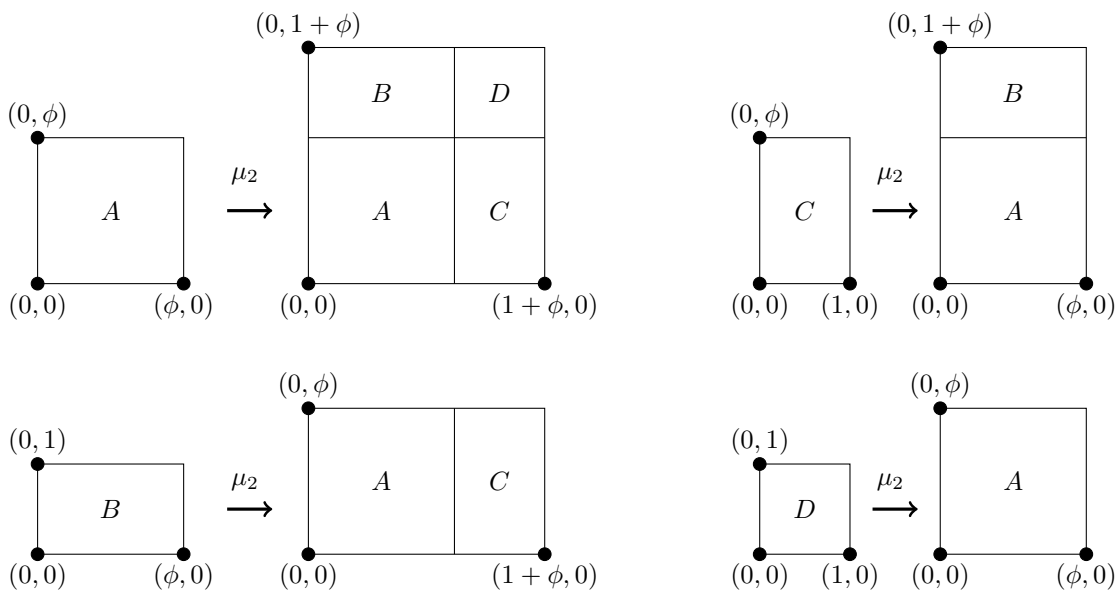
\begin{figure}[ht]
\centering
\begin{tikzpicture}[scale=1.2]
\draw (0,0) rectangle (1.6180339887498948482045868343656381177203091798057628621354486227,1.6180339887498948482045868343656381177203091798057628621354486227);
\draw[very thick, ->] (2.1,0.8090169943749474241022934171828190588601545899028814310677243113) -- (2.6,0.8090169943749474241022934171828190588601545899028814310677243113); 

\node at (2.3,1.2) {$\mu_2$};

\draw (3,0) rectangle (5.6180339887498948482045868343656381177203091798057628621354486227,2.6180339887498948482045868343656381177203091798057628621354486227);
\draw (8,0) rectangle (9,1.6180339887498948482045868343656381177203091798057628621354486227);
\draw[very thick, ->] (9.25,0.8090169943749474241022934171828190588601545899028814310677243113) -- (9.75,0.8090169943749474241022934171828190588601545899028814310677243113);
\node at (9.45,1.2) {$\mu_2$};

\draw (10,0) rectangle (11.6180339887498948482045868343656381177203091798057628621354486227,2.6180339887498948482045868343656381177203091798057628621354486227);
\draw (10,1.6180339887498948482045868343656381177203091798057628621354486227) -- (11.6180339887498948482045868343656381177203091798057628621354486227,1.6180339887498948482045868343656381177203091798057628621354486227);
\draw (3,1.6180339887498948482045868343656381177203091798057628621354486227) -- (5.6180339887498948482045868343656381177203091798057628621354486227,1.6180339887498948482045868343656381177203091798057628621354486227);
\draw (4.6180339887498948482045868343656381177203091798057628621354486227,2.6180339887498948482045868343656381177203091798057628621354486227) -- (4.6180339887498948482045868343656381177203091798057628621354486227,0);
\draw (0,-3) rectangle (1.6180339887498948482045868343656381177203091798057628621354486227,1-3);
\draw[very thick, ->] (2.1,0.5-3) -- (2.6,0.5-3);
\node at (2.3,-2.2) {$\mu_2$};
\draw (3,-3) rectangle (5.6180339887498948482045868343656381177203091798057628621354486227,1.6180339887498948482045868343656381177203091798057628621354486227-3);
\draw (4.6180339887498948482045868343656381177203091798057628621354486227,1.6180339887498948482045868343656381177203091798057628621354486227-3) -- (4.6180339887498948482045868343656381177203091798057628621354486227,1.6180339887498948482045868343656381177203091798057628621354486227-3-1.6180339887498948482045868343656381177203091798057628621354486227);
\draw (8,-3) rectangle (9,1-3);
\draw[very thick, ->] (9.25,0.5-3) -- (9.75,0.5-3);
\node at (9.45,-2.2) {$\mu_2$};
\draw (10,0-3) rectangle (11.6180339887498948482045868343656381177203091798057628621354486227,1.6180339887498948482045868343656381177203091798057628621354486227-3);
\node at (0.8090169943749474241022934171828190588601545899028814310677243113,0.8090169943749474241022934171828190588601545899028814310677243113) {$A$};
\node at (3.8090169943749474241022934171828190588601545899028814310677243113,0.8090169943749474241022934171828190588601545899028814310677243113) {$A$};
\node at (3.8090169943749474241022934171828190588601545899028814310677243113,0.8090169943749474241022934171828190588601545899028814310677243113-3) {$A$};
\node at (3.8090169943749474241022934171828190588601545899028814310677243113+1.3090169943749474241022934171828190588601545899028814310677243113,0.8090169943749474241022934171828190588601545899028814310677243113-3) {$C$};
\node at (3.8090169943749474241022934171828190588601545899028814310677243113+1.3090169943749474241022934171828190588601545899028814310677243113,0.8090169943749474241022934171828190588601545899028814310677243113) {$C$};
\node at (3.8090169943749474241022934171828190588601545899028814310677243113+1.3090169943749474241022934171828190588601545899028814310677243113,0.8090169943749474241022934171828190588601545899028814310677243113+1.3090169943749474241022934171828190588601545899028814310677243113) {$D$};
\node at (3.8090169943749474241022934171828190588601545899028814310677243113,0.8090169943749474241022934171828190588601545899028814310677243113+1.3090169943749474241022934171828190588601545899028814310677243113) {$B$};
\node at (10.8090169943749474241022934171828190588601545899028814310677243113,0.8090169943749474241022934171828190588601545899028814310677243113+1.3090169943749474241022934171828190588601545899028814310677243113) {$B$};
\node at (8.5,0.8090169943749474241022934171828190588601545899028814310677243113) {$C$};
\node at (8.5,0.5-3) {$D$};
\node at (0.8090169943749474241022934171828190588601545899028814310677243113,0.5-3) {$B$};
\node at (10.8090169943749474241022934171828190588601545899028814310677243113,0.8090169943749474241022934171828190588601545899028814310677243113-3) {$A$};
\node at (10.8090169943749474241022934171828190588601545899028814310677243113,0.8090169943749474241022934171828190588601545899028814310677243113) {$A$};
\filldraw[black] (0,0) circle (2pt) node[anchor=north] {$(0,0)$};
\filldraw[black] (3,0) circle (2pt) node[anchor=north] {$(0,0)$} ; 
\filldraw[black] (0,1.6180339887498948482045868343656381177203091798057628621354486227) circle (2pt) node[anchor=south] 
{$(0,\phi)$};
\filldraw[black] (3,2.6180339887498948482045868343656381177203091798057628621354486227) circle (2pt) node[anchor=south] 
{$(0,1+\phi)$};
\filldraw[black] (0,-3) circle (2pt) node[anchor=north] {$(0,0)$}; 
\filldraw[black] (3,-3) circle (2pt) node[anchor=north] {$(0,0)$}; 
\filldraw[black] (0,1-3) circle (2pt) node[anchor=south] {$(0,1)$};
\filldraw[black] (3,1.6180339887498948482045868343656381177203091798057628621354486227-3) circle (2pt) node[anchor=south] 
{$(0,\phi)$};

\filldraw[black] (3+2.6180339887498948482045868343656381177203091798057628621354486227,0-3) circle (2pt) node[anchor=north] 
{$(1+\phi,0)$};
\filldraw[black] (1.6180339887498948482045868343656381177203091798057628621354486227,0-3) circle (2pt) node[anchor=north] 
{$(\phi,0)$};
\filldraw[black] (1.6180339887498948482045868343656381177203091798057628621354486227,0) circle (2pt) node[anchor=north] 
{$(\phi,0)$};
\filldraw[black] (5.6180339887498948482045868343656381177203091798057628621354486227,0) circle (2pt) node[anchor=north] 
{$(1+\phi,0)$};
\filldraw[black] (8,0) circle (2pt) node[anchor=north] {$(0,0)$}; 
\filldraw[black] (9,0) circle (2pt) node[anchor=north] {$(1,0)$}; 
\filldraw[black] (8,0-3) circle (2pt) node[anchor=north] {$(0,0)$}; 
\filldraw[black] (9,0-3) circle (2pt) node[anchor=north] {$(1,0)$}; 
\filldraw[black] (8,1.6180339887498948482045868343656381177203091798057628621354486227) circle (2pt) node[anchor=south] 
{$(0,\phi)$};
\filldraw[black] (10,0) circle (2pt) node[anchor=north] {$(0,0)$}; 
\filldraw[black]  (10,-3) circle (2pt) node[anchor=north] {$(0,0)$}; 
\filldraw[black]  (10,2.6180339887498948482045868343656381177203091798057628621354486227) circle (2pt) node[anchor=south] 
{$(0,1+\phi)$};
\filldraw[black] (10,1.6180339887498948482045868343656381177203091798057628621354486227-3) circle (2pt) node[anchor=south]  
{$(0,\phi)$};
\filldraw[black] (10+1.6180339887498948482045868343656381177203091798057628621354486227,0) circle (2pt) node[anchor=north]  
{$(\phi,0)$}; 
\filldraw[black] (10+1.6180339887498948482045868343656381177203091798057628621354486227,0-3) circle (2pt) node[anchor=north]  
{$(\phi,0)$}; 
\filldraw[black] (8,0-3+1) circle (2pt) node[anchor=south]  {$(0,1)$}; 
\end{tikzpicture}    
\caption{The Cartesian product of Fibonacci Substitutions.}
\label{Figure_Fibonacci_times_Fibonacci_Substitution}
\end{figure}

\subsection{A Substitution over Decorated Tiles}

Consider the collection of 24 rectangle tiles $\{p_1,\dots,p_{24}\}$ shown in Figure \ref{F_fibonacci_substitution_prototiles}, which are defined by the following relations:
\begin{align*}
\sss(p_1) = [0,\phi]\times[0,\phi] & \quad\quad\text{and}\quad\quad l(p_1) = A^+_1, \\
\sss(p_2) = [0,\phi]\times[0,\phi] & \quad\quad\text{and}\quad\quad l(p_2) = A^+_2, \\
\sss(p_3) = [0,\phi]\times[0,\phi] & \quad\quad\text{and}\quad\quad l(p_3) = A^+_3, \\
\sss(p_4) = [0,\phi]\times[0,\phi] & \quad\quad\text{and}\quad\quad l(p_4) = A^+_4, \\
\sss(p_5) = [0,\phi]\times[0,1] & \quad\quad\text{and}\quad\quad l(p_5) = B^+_1, \\
\sss(p_6) = [0,\phi]\times[0,1] & \quad\quad\text{and}\quad\quad l(p_6) = B^+_2, \\
\sss(p_7) = [0,1]\times[0,\phi] & \quad\quad\text{and}\quad\quad l(p_7) = C^+_1, \\
\sss(p_8) = [0,1]\times[0,\phi] & \quad\quad\text{and}\quad\quad l(p_8) = C^+_2, \\
\sss(p_9) = [0,1]\times[0,1] & \quad\quad\text{and}\quad\quad l(p_9) = D^+_1, \\
\sss(p_{10}) = [0,1]\times[0,1] & \quad\quad\text{and}\quad\quad l(p_{10}) = D^+_2, \\
\sss(p_{11}) = [0,1]\times[0,1] & \quad\quad\text{and}\quad\quad l(p_{11}) = D^+_3, \\
\sss(p_{12}) = [0,1]\times[0,1] & \quad\quad\text{and}\quad\quad l(p_{12}) = D^+_4, \\   
\sss(p_{13}) = [0,\phi]\times[0,\phi] & \quad\quad\text{and}\quad\quad l(p_{13}) = A^-_1, \\
\sss(p_{14}) = [0,\phi]\times[0,\phi] & \quad\quad\text{and}\quad\quad l(p_{14}) = A^-_2, \\
\sss(p_{15}) = [0,\phi]\times[0,\phi] & \quad\quad\text{and}\quad\quad l(p_{15}) = A^-_3, \\
\sss(p_{16}) = [0,\phi]\times[0,\phi] & \quad\quad\text{and}\quad\quad l(p_{16}) = A^-_4, \\
\sss(p_{17}) = [0,\phi]\times[0,1] & \quad\quad\text{and}\quad\quad l(p_{17}) = B^-_1, \\
\sss(p_{18}) = [0,\phi]\times[0,1] & \quad\quad\text{and}\quad\quad l(p_{18}) = B^-_2, \\
\sss(p_{19}) = [0,1]\times[0,\phi] & \quad\quad\text{and}\quad\quad l(p_{19}) = C^-_1, \\
\sss(p_{20}) = [0,1]\times[0,\phi] & \quad\quad\text{and}\quad\quad l(p_{20}) = C^-_2, \\
\sss(p_{21}) = [0,1]\times[0,1] & \quad\quad\text{and}\quad\quad l(p_{21}) = D^-_1, \\
\sss(p_{22}) = [0,1]\times[0,1] & \quad\quad\text{and}\quad\quad l(p_{22}) = D^-_2, \\
\sss(p_{23}) = [0,1]\times[0,1] & \quad\quad\text{and}\quad\quad l(p_{23}) = D^-_3, \\
\sss(p_{24}) = [0,1]\times[0,1] & \quad\quad\text{and}\quad\quad l(p_{24}) = D^-_4,
\end{align*}
where $\phi=(1+\sqrt{5})/2$. Each rectangle is distinguished by a color $c\in\{A,B,C,D\}$ and an attached oriented curve (decoration) $e_i^j$ for $j\in\{-,+\}$ and $i\in\{1,2,3,4\}$. This information is summarised in labels. For example $l(p_1)=A^+_1$ indicates that $p_1$ has a color $A$ and an attached curve $e_1^+$ which has a start point $(0,0)$ and an end point $(1,0)$. 

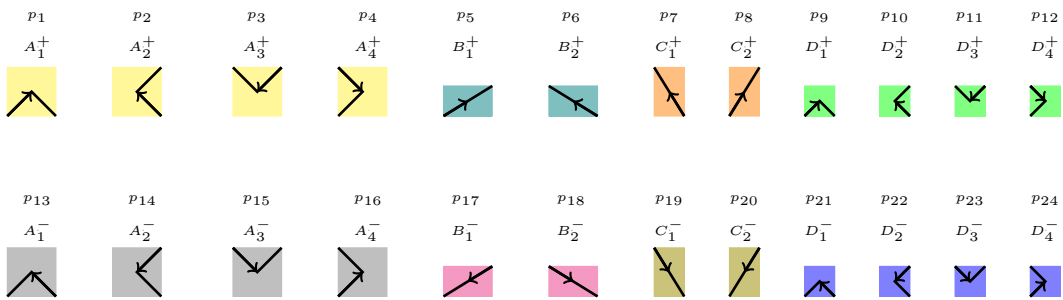
\begin{figure}[H]
\centering
\begin{tikzpicture}[scale=0.4]
\centering

\filldraw[white] (35,0) circle (0pt);

\filldraw[black] (0.5,2.8-0.5) circle (0pt) node{\tiny{$A^+_1$}};
\filldraw[black] (4,2.8-0.5) circle (0pt) node{\tiny{$A^+_2$}};
\filldraw[black] (7.8,2.8-0.5) circle (0pt) node{\tiny{$A^+_3$}};
\filldraw[black] (11.5,2.8-0.5) circle (0pt) node{\tiny{$A^+_4$}};
\filldraw[black] (14.75,2.8-0.5) circle (0pt) node{\tiny{$B^+_1$}};
\filldraw[black] (18.25,2.8-0.5) circle (0pt) node{\tiny{$B^+_2$}};
\filldraw[black] (21.5,2.8-0.5) circle (0pt) node{\tiny{$C^+_1$}};
\filldraw[black] (24,2.8-0.5) circle (0pt) node{\tiny{$C^+_2$}};
\filldraw[black] (26.5,2.8-0.5) circle (0pt) node{\tiny{$D^+_1$}};
\filldraw[black] (29,2.8-0.5) circle (0pt) node{\tiny{$D^+_2$}};
\filldraw[black] (31.5,2.8-0.5) circle (0pt) node{\tiny{$D^+_3$}};
\filldraw[black] (34,2.8-0.5) circle (0pt) node{\tiny{$D^+_4$}};

\filldraw[black] (0.5,3.8-0.5) circle (0pt) node{\tiny{$p_1$}};
\filldraw[black] (4,3.8-0.5) circle (0pt) node{\tiny{$p_2$}};
\filldraw[black] (7.8,3.8-0.5) circle (0pt) node{\tiny{$p_3$}};
\filldraw[black] (11.5,3.8-0.5) circle (0pt) node{\tiny{$p_4$}};
\filldraw[black] (14.75,3.8-0.5) circle (0pt) node{\tiny{$p_5$}};
\filldraw[black] (18.25,3.8-0.5) circle (0pt) node{\tiny{$p_6$}};
\filldraw[black] (21.5,3.8-0.5) circle (0pt) node{\tiny{$p_7$}};
\filldraw[black] (24,3.8-0.5) circle (0pt) node{\tiny{$p_8$}};
\filldraw[black] (26.5,3.8-0.5) circle (0pt) node{\tiny{$p_9$}};
\filldraw[black] (29,3.8-0.5) circle (0pt) node{\tiny{$p_{10}$}};
\filldraw[black] (31.5,3.8-0.5) circle (0pt) node{\tiny{$p_{11}$}};
\filldraw[black] (34,3.8-0.5) circle (0pt) node{\tiny{$p_{12}$}};

\FAI{-0.5}{0}{0}{1};
\FAII{3}{0}{0}{1};
\FAIII{7}{0}{0}{1};
\FAIV{10.5}{0}{0}{1};
\FBI{14}{0}{0}{1};
\FBII{17.5}{0}{0}{1};
\FCI{21}{0}{0}{1};
\FCII{23.5}{0}{0}{1};
\FDI{26}{0}{0}{1};
\FDII{28.5}{0}{0}{1};
\FDIII{31}{0}{0}{1};
\FDIV{33.5}{0}{0}{1};

\FARI{-0.5}{-6}{0}{1};
\FARII{3}{-6}{0}{1};
\FARIII{7}{-6}{0}{1};
\FARIV{10.5}{-6}{0}{1};
\FBRI{14}{-6}{0}{1};
\FBRII{17.5}{-6}{0}{1};
\FCRI{21}{-6}{0}{1};
\FCRII{23.5}{-6}{0}{1};
\FDRI{26}{-6}{0}{1};
\FDRII{28.5}{-6}{0}{1};
\FDRIII{31}{-6}{0}{1};
\FDRIV{33.5}{-6}{0}{1};

\filldraw[black] (0.5,-3.8) circle (0pt) node{\tiny{$A^-_1$}};
\filldraw[black] (4,-3.8) circle (0pt) node{\tiny{$A^-_2$}};
\filldraw[black] (7.8,-3.8) circle (0pt) node{\tiny{$A^-_3$}};
\filldraw[black] (11.5,-3.8) circle (0pt) node{\tiny{$A^-_4$}};
\filldraw[black] (14.75,-3.8) circle (0pt) node{\tiny{$B^-_1$}};
\filldraw[black] (18.25,-3.8) circle (0pt) node{\tiny{$B^-_2$}};
\filldraw[black] (21.5,-3.8) circle (0pt) node{\tiny{$C^-_1$}};
\filldraw[black] (24,-3.8) circle (0pt) node{\tiny{$C^-_2$}};
\filldraw[black] (26.5,-3.8) circle (0pt) node{\tiny{$D^-_1$}};
\filldraw[black] (29,-3.8) circle (0pt) node{\tiny{$D^-_2$}};
\filldraw[black] (31.5,-3.8) circle (0pt) node{\tiny{$D^-_3$}};
\filldraw[black] (34,-3.8) circle (0pt) node{\tiny{$D^-_4$}};

\filldraw[black] (0.5,-2.8) circle (0pt) node{\tiny{$p_{13}$}};
\filldraw[black] (4,-2.8) circle (0pt) node{\tiny{$p_{14}$}};
\filldraw[black] (7.8,-2.8) circle (0pt) node{\tiny{$p_{15}$}};
\filldraw[black] (11.5,-2.8) circle (0pt) node{\tiny{$p_{16}$}};
\filldraw[black] (14.75,-2.8) circle (0pt) node{\tiny{$p_{17}$}};
\filldraw[black] (18.25,-2.8) circle (0pt) node{\tiny{$p_{18}$}};
\filldraw[black] (21.5,-2.8) circle (0pt) node{\tiny{$p_{19}$}};
\filldraw[black] (24,-2.8) circle (0pt) node{\tiny{$p_{20}$}};
\filldraw[black] (26.5,-2.8) circle (0pt) node{\tiny{$p_{21}$}};
\filldraw[black] (29,-2.8) circle (0pt) node{\tiny{$p_{22}$}};
\filldraw[black] (31.5,-2.8) circle (0pt) node{\tiny{$p_{23}$}};
\filldraw[black] (34,-2.8) circle (0pt) node{\tiny{$p_{24}$}};

\end{tikzpicture}
\caption{A collection of 24 rectangle tiles.} 
\label{F_fibonacci_substitution_prototiles}
\end{figure}

Figure \ref{F_fibonacci_substitution_supertiles} demonstrates a substitution over $\{p_1,\dots,p_{24}\}$. This substitution and its domain are denoted by $\omega$ and $\cP_{\omega}$, respectively, throughout the document. For simplicity, we will represent tiles in $\cP_{\omega}$ by their labels only, because labels are sufficient enough to indicate which tiles we are referring to. For example, $\omega(p_1) = \omega(A^+_1)$ is a collection of four rectangle tiles that are (translated copies of) $A^-_4, B^+_1, D^+_2$ and $C^-_1$.

Labels are essential to see the relation between $\omega$ and $\mu_2$. Observe that $\psi \circ \omega = \mu_2$ where $\psi$ is a function defined on $\cP_{\omega}$ such that
\begin{align*}
\psi(A^j_i) = A & \text{\ for\ all\ }j\in\{-,+\}\ \text{and}\ i\in\{1,2,3,4\}, \\
\psi(B^j_i) = B & \text{\ for\ all\ }j\in\{-,+\}\ \text{and}\ i\in\{1,2\}, \\
\psi(C^j_i) = C & \text{\ for\ all\ }j\in\{-,+\}\ \text{and}\ i\in\{1,2\}, \\
\psi(D^j_i) = D & \text{\ for\ all\ }j\in\{-,+\}\ \text{and}\ i\in\{1,2,3,4\}.
\end{align*}
In fact, $\omega$ is a substitution defined over decorated tiles of $\mu_2$.

\begin{figure}[H]
\centering
\begin{tikzpicture}[scale=0.4]
\centering
\SFAI{-0.5}{0}{0}{1};
\SFAII{3}{0}{0}{1};
\SFAIII{7}{0}{0}{1};
\SFAIV{10.5}{0}{0}{1};

\SFBI{14}{0}{0}{1};
\SFBII{17.5}{0}{0}{1};
\SFCI{21}{0}{0}{1};
\SFCII{23.5}{0}{0}{1};

\SFDI{26}{0}{0}{1};
\SFDII{28.5}{0}{0}{1};
\SFDIII{31}{0}{0}{1};
\SFDIV{33.5}{0}{0}{1};

\filldraw[black] (0.75,3.5) circle (0pt) node{\tiny{$\omega(A^+_1)$}};
\filldraw[black] (4.5,3.5) circle (0pt) node{\tiny{$\omega(A^+_2)$}};
\filldraw[black] (8.3,3.5) circle (0pt) node{\tiny{$\omega(A^+_3)$}};
\filldraw[black] (12,3.5) circle (0pt) node{\tiny{$\omega(A^+_4)$}};
\filldraw[black] (15.25,3.5) circle (0pt) node{\tiny{$\omega(B^+_1)$}};
\filldraw[black] (18.75,3.5) circle (0pt) node{\tiny{$\omega(B^+_2)$}};
\filldraw[black] (22,3.5) circle (0pt) node{\tiny{$\omega(C^+_1)$}};
\filldraw[black] (24.5,3.5) circle (0pt) node{\tiny{$\omega(C^+_2)$}};
\filldraw[black] (27,3.5) circle (0pt) node{\tiny{$\omega(D^+_1)$}};
\filldraw[black] (29.5,3.5) circle (0pt) node{\tiny{$\omega(D^+_2)$}};
\filldraw[black] (32,3.5) circle (0pt) node{\tiny{$\omega(D^+_3)$}};
\filldraw[black] (34.5,3.5) circle (0pt) node{\tiny{$\omega(D^+_4)$}};

\SFARI{-0.5}{-5.5}{0}{1};
\SFARII{3}{-5.5}{0}{1};
\SFARIII{7}{-5.5}{0}{1};
\SFARIV{10.5}{-5.5}{0}{1};

\SFBRI{14}{-5.5}{0}{1};
\SFBRII{17.5}{-5.5}{0}{1};
\SFCRI{21}{-5.5}{0}{1};
\SFCRII{23.5}{-5.5}{0}{1};

\SFDRI{26}{-5.5}{0}{1};
\SFDRII{28.5}{-5.5}{0}{1};
\SFDRIII{31}{-5.5}{0}{1};
\SFDRIV{33.5}{-5.5}{0}{1};

\filldraw[black] (0.75,-2) circle (0pt) node{\tiny{$\omega(A^-_1)$}};
\filldraw[black] (4.5,-2) circle (0pt) node{\tiny{$\omega(A^-_2)$}};
\filldraw[black] (8.3,-2) circle (0pt) node{\tiny{$\omega(A^-_3)$}};
\filldraw[black] (12,-2) circle (0pt) node{\tiny{$\omega(A^-_4)$}};
\filldraw[black] (15.25,-2) circle (0pt) node{\tiny{$\omega(B^-_1)$}};
\filldraw[black] (18.75,-2) circle (0pt) node{\tiny{$\omega(B^-_2)$}};
\filldraw[black] (22,-2) circle (0pt) node{\tiny{$\omega(C^-_1)$}};
\filldraw[black] (24.5,-2) circle (0pt) node{\tiny{$\omega(C^-_2)$}};
\filldraw[black] (27,-2) circle (0pt) node{\tiny{$\omega(D^-_1)$}};
\filldraw[black] (29.5,-2) circle (0pt) node{\tiny{$\omega(D^-_2)$}};
\filldraw[black] (32,-2) circle (0pt) node{\tiny{$\omega(D^-_3)$}};
\filldraw[black] (34.5,-2) circle (0pt) node{\tiny{$\omega(D^-_4)$}};

\end{tikzpicture}
\caption{An iteration rule $\omega$ defined for the 24 rectangle tiles introduced in Figure \ref{F_fibonacci_substitution_prototiles} respectively.}
\label{F_fibonacci_substitution_supertiles}
\end{figure}
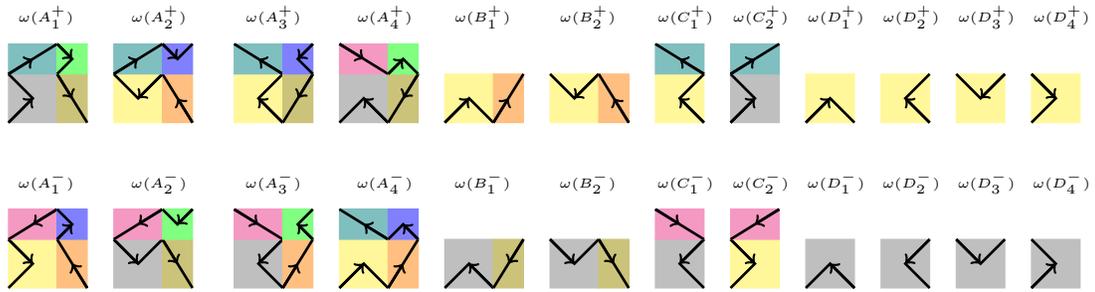

Note that attached curves on 1-supertiles in Figure \ref{F_fibonacci_substitution_supertiles} are designed so that they can be concatenated to form single curves. This is true in general for each $k$-supertile for $k\in\ZZ^+$. In particular, we have the following result.

\begin{proposition}\label{proposition_supertile_decoration}
Let $k\in\ZZ^+$ and $p\in\cP_{\omega}$ be fixed. Suppose $p$ has a curve (decoration) $e_p$ attached on it with a start point $x\in\RR^2$ and an end point $y\in\RR^2$. Then $\omega^k(p)$ consists of decorated tiles whose attached curves (decorations) can be concatenated to form a single curve with a start point $\phi^kx$ and an end point $\phi^ky$, where $\phi=(1+\sqrt{5})/2$.
\end{proposition}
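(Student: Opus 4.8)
The plan is to argue by induction on $k$, isolating the entire ``design'' content of the decorations into the base case $k=1$ and letting the single expansion factor $\phi$ drive the inductive step. First I would record that the statement is translation-equivariant: by the extension rule $\omega^\prime(p_j+v)=\omega(p_j)+\phi\cdot v$, if the claim holds for a prototile $p_j\in\cP_{\omega}$ then it holds for every translate $p_j+v$, since the attached decoration and the whole one-step patch simply translate while the endpoints of the concatenated curve scale by $\phi$ about the origin and then translate. Thus it suffices to verify $k=1$ for the $24$ prototiles $p_1,\dots,p_{24}$. This is a finite check read directly off Figure~\ref{F_fibonacci_substitution_supertiles}: for each $p$ with decoration $e_p$ of start $x$ and end $y$, one orders the two or four constituent decorated tiles of $\omega(p)$ as $s_1,\dots,s_n$ so that the end point of the decoration of $s_i$ equals the start point of the decoration of $s_{i+1}$, the start of $s_1$'s decoration is $\phi x$, and the end of $s_n$'s decoration is $\phi y$. (Here ``a single curve'' means the continuous path obtained by concatenating the pieces on successive parameter subintervals, which is well defined precisely because consecutive endpoints agree.) The $+/-$ and rotational symmetries among the tiles collapse this to a handful of genuinely distinct cases.

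For the inductive step I would assume the claim for some $k\ge 1$ and fix $p$. Using the recursive definition of powers, $\omega^{k+1}(p)=\bigcup_{t\in\omega^k(p)}\omega(t)$. By the inductive hypothesis the decorated tiles of $\omega^k(p)$ list as $t_1,\dots,t_m$ whose decorations concatenate into one curve; writing $s_i,f_i$ for the start and end of the decoration of $t_i$, this means $f_i=s_{i+1}$ for $1\le i<m$, together with $s_1=\phi^k x$ and $f_m=\phi^k y$. Each $t_i$ is a translate of a prototile, so the base case in its translation-equivariant form applies: the decorations of the tiles comprising $\omega(t_i)$ concatenate into a single curve from $\phi s_i$ to $\phi f_i$.

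It then remains to join these $m$ curves, and the crux is that multiplication by $\phi$ preserves every coincidence of endpoints: from $f_i=s_{i+1}$ we get $\phi f_i=\phi s_{i+1}$, so the end point of the curve built inside $\omega(t_i)$ equals the start point of the curve built inside $\omega(t_{i+1})$. Hence all $m$ curves concatenate into one curve, with start point $\phi s_1=\phi\cdot\phi^k x=\phi^{k+1}x$ and end point $\phi f_m=\phi\cdot\phi^k y=\phi^{k+1}y$, which is exactly the claim for $k+1$.

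The only substantive work is the base case: the whole proposition rests on the $24$ decorations having been chosen so that endpoint-matching and $\phi$-scaling hold simultaneously for all $24$ one-step substitutions. Once that finite verification is in place the inductive step is essentially forced, since scaling by the lone expansion factor respects all endpoint coincidences. I would therefore expect the bookkeeping of the base case --- tracking orientations, the placement of each constituent tile as prescribed by the macros defining $\omega$, and the resulting start/end coordinates --- to be the main obstacle, with the inductive machinery entirely routine.
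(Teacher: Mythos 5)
Your proposal is correct, and its skeleton matches the paper's: a finite base-case check of the $24$ one-step substitutions read off Figure~\ref{F_fibonacci_substitution_supertiles}, followed by an induction whose only moving part is that multiplication by $\phi$ preserves coincidences of endpoints. The one genuine difference is the direction in which you factor the power: you peel off the \emph{last} substitution step, writing $\omega^{k+1}(p)=\bigcup_{t\in\omega^{k}(p)}\omega(t)$, applying the inductive hypothesis once to the whole level-$k$ patch and then the ($\phi$-scaled, translated) base case to each of its $m$ tiles; the paper peels off the \emph{first} step, writing $\omega^{m}(p)=\bigcup_{t\in\omega(p)}\omega^{m-1}(t)$ and applying the inductive hypothesis at level $m-1$ to adjacent pairs $t_1,t_2\in\omega(p)$ whose decorations meet at a point $a$, so that the two level-$(m-1)$ curves share the endpoint $\phi^{m-1}a$. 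The two factorizations agree by associativity, and your version has the small advantage of literally matching the paper's recursive definition of $(\mu')^k$. Your version also buys a point of rigor the paper glosses over: since the tiles being substituted are translates of prototiles rather than prototiles themselves, one needs the translation-equivariance $\omega'(p+v)=\omega(p)+\phi v$ to transport the statement to translates, which you state explicitly, whereas the paper invokes its hypothesis ``for each $q\in\cP_\omega$'' and uses it on translated tiles without comment. The trade-off is that your inductive step invokes the base case $m$ times (with $m$ growing) while the paper's invokes the inductive hypothesis only on the at most four tiles of $\omega(p)$; neither matters logically, and both proofs rest on the same finite verification that the $24$ decorations were designed so that endpoint-matching and $\phi$-scaling hold simultaneously.
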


\begin{proof}
The statement holds for $k=1$ as it can be readily observed from Figure \ref{F_fibonacci_substitution_supertiles}. Assume that the statement holds for each $q\in\cP_{\omega}$ and $i\in\ZZ^+$ with $i<m$ for some $m>1$. Let $p\in\cP_{\omega}$ be fixed. Suppose $t_1,t_2\in\omega(p)$ such that $e_1, e_2$ are decorations of $t_1,t_2$, respectively so that $r(e_1)=s(e_2)=a\in\RR^2$ where $r(e_1)$ is the end point of $e_1$ and $s(e_2)$ is the starting point of $e_2$. By induction assumption, $\omega^{m-1}(t_1)$ consists of tiles whose attached curves (decorations) can be concatenated to form a single curve $C_1$ with an end point $\phi^{m-1}a$, and $\omega^{m-1}(t_2)$ consists of tiles whose attached curves (decorations) can be concatenated to form a single curve $C_2$ with a start point $\phi^{m-1}a$. That is, $C_1$ and $C_2$ can be concatenated. Hence, the statement holds for $m>1$ as well because $\omega^m(p)=\bigcup\limits_{t\in\omega(p)}\omega^{m-1}(t)$.
\end{proof}

Under the lights of Proposition \ref{proposition_supertile_decoration}, we are able to define decorations for supertiles of $\omega$.

\begin{definition}
Let $k\in\ZZ^+$ and $p\in\cP_{\omega}$ be fixed. The single curve formed by concatenation of decorations of tiles in $\omega^{k}(p)$ is called the \emph{decoration} of $\omega^k(p)$.
\end{definition}

\begin{remark}
For each $k\in\ZZ^+$, a decoration $C_k$ of a $k$-supertile $\omega^k(p)$ for $p\in\cP_{\omega}$ visits every tile in $\omega^k(p)$ exactly once. As such, $C_k$ induces a natural total order $\lesssim$ between tiles of $\omega^k(p)$ such that
\[
t_1\lesssim t_2\quad\text{if\ }t_1\text{\ is\ visited\ by\ }C_k\text{\ before\ }t_2
\]
for each distinct pair $t_1, t_2\in\omega^k(p)$. For example, decoration of $\omega(A_1^+)$ induces a total order between $\{A^-_4, B^+_1 , D^+_4, C^-_1\}$ such that
\[
A^-_4\lesssim B^+_1 \lesssim D^+_4\lesssim C^-_1.
\]
\end{remark}

\paragraph{A Tessellation of the Plane}
Start with the tile $p_1=A^+_1$. Iterating $A^+_1$ twice generates a 3x3 grid of tiles (i.e. $\omega^2(A^+_1)$) as shown on the middle of Figure \ref{figure_tesselation_quater}. Note that $A^+_1$ is invariant in this process. By the same token, $\omega^2(A^+_1)$ is contained in $\omega^4(A^+_1)$ as shown on the right side of Figure \ref{figure_tesselation_quater}. Iterating ad inifinitum covers the first quadrant of the plane $\{(a,b):\ a,b\geq0\}$. Its reflections over the lines $x=0$, $y=0$ and $x=-y$ cover the other quadrants, and generates a tessellation of the plane.

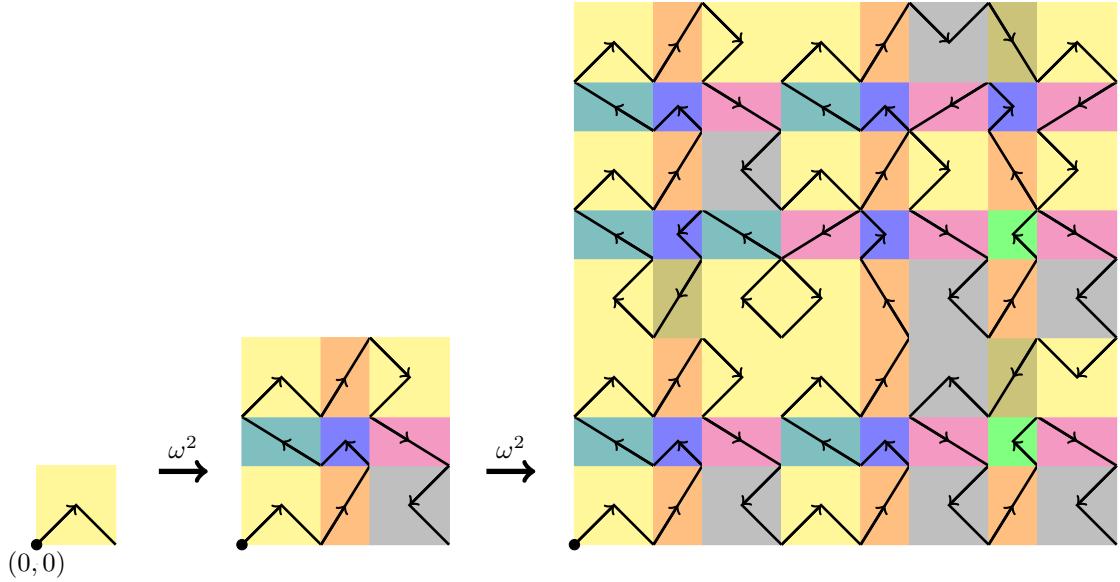
\begin{figure}[htp]
\centering
\begin{tikzpicture}[scale=0.65]
\centering

\FAI{-1}{0}{0}{1};

\SSFAI{3.2}{0}{0}{1};

\SSSSFAI{10}{0}{0}{1};
\draw[line width=2pt, color=black, ->] (1.5,1.5) -- (2.5,1.5);
\draw[line width=2pt, color=black, ->] (8.2,1.5) -- (9.2,1.5);

\filldraw[black] (2,2) circle (0pt) node{$\omega^2$};
\filldraw[black] (8.7,2) circle (0pt) node{$\omega^2$};

\filldraw[black] (-1,-0.4) circle (0pt) node{$(0,0)$};
\filldraw[black] (-1,0) circle (3pt) ;
\filldraw[black] (3.2,0) circle (3pt) ;
\filldraw[black] (10,0) circle (3pt) ;
\end{tikzpicture}
\caption{A tessellation of the first quadrant of the plane}
\label{figure_tesselation_quater}
\end{figure}

Note that a neater version of decorations of supertiles can be generated by marking each tile with its centre and connecting marked points through lines successively via the total order induced by decorations of supertiles (Figure \ref{figure_tesselation_quater_neat}). Note also that there are various other possible tessellations of the plane that can be constructed by $\omega$ \cite[Theorem 1.4]{sadun2008topology}.

\paragraph{One-dimensional Representation of $\omega$} 
The map $\omega$ induces a one dimensional substitution $\nu$ whenever it is read via the total order induced from decorations of 1-supertiles. Precisely,

\begin{align*}
    A_1^+ & \mapsto A_4^-, B_1^+, D_2^+, C_1^-, & A_1^- &\mapsto C_1^+, D_2^-, B_1^-, A_4^+, & D_1^+ &\mapsto A_1^+, \\
    A_2^+ & \mapsto C_1^+, A_3^+, B_1^+, D_1^-, & A_2^- &\mapsto D_1^+, B_1^-, A_3^-, C_1^-, & D_2^+ &\mapsto A_2^+, \\
    A_3^+ & \mapsto D_2^-, C_2^-, A_2^+, B_2^+, & A_3^- &\mapsto B_2^-, A_2^-, C_2^+, D_2^+, & D_3^+ &\mapsto A_3^+, \\
    A_4^+ & \mapsto B_2^-, D_1^+, C_2^-, A_1^-, & A_4^- &\mapsto A_1^+, C_2^+, D_1^-, B_2^+, & D_4^+ &\mapsto A_4^+, \\
    B_1^+ & \mapsto A_1^+, C_2^+, & B_1^- &\mapsto C_2^-, A_1^-, & D_1^- &\mapsto A_1^-, \\
    B_2^+ & \mapsto C_1^+, A_3^+, & B_2^- &\mapsto A_3^-, C_1^-, & D_2^- &\mapsto A_2^-, \\
    C_1^+ & \mapsto A_2^+, B_2^+, & C_1^- &\mapsto B_2^-, A_2^-, & D_3^- &\mapsto A_3^-, \\
    C_2^+ & \mapsto A_4^-, B_1^+, & C_2^- &\mapsto B_1^-, A_4^+, & D_4^- &\mapsto A_4^-.
\end{align*}
The map $\nu$ (or $\omega$) induces a $24\times24$ matrix (Figure \ref{figure_matrix_rep}), whose $(i,j)$-th entry is the number of times $p_j$ appears in $\nu(p_i)$ (or $\omega(p_i)$) with the notation convention given in Figure \ref{F_fibonacci_substitution_prototiles}. It represents the map $\nu$ (or $\omega$) and its Perron-Frobenius eigenvalue is $\phi^2$ with multiplicity 1, where $\phi=(1+\sqrt{5})/2$, as a consequence of Perron-Frobenius Theorem \cite[Theorem 2 and Theorem 3]{frettloh2017substitution}.
\begin{figure}[H]
\centering
\[
\begin{bmatrix}
0 & 0 & 0 & 0 & 1 & 0 & 0 & 0 & 0 & 1 & 0 & 0 & 0 & 0 & 0 & 1 & 0 & 0 & 1 & 0 & 0 & 0 & 0 & 0\\
0 & 0 & 1 & 0 & 1 & 0 & 1 & 0 & 0 & 0 & 0 & 0 & 0 & 0 & 0 & 0 & 0 & 0 & 0 & 0 & 1 & 0 & 0 & 0\\
0 & 1 & 0 & 0 & 0 & 1 & 0 & 0 & 0 & 0 & 0 & 0 & 0 & 0 & 0 & 0 & 0 & 0 & 0 & 1 & 0 & 1 & 0 & 0\\
0 & 0 & 0 & 0 & 0 & 0 & 0 & 0 & 1 & 0 & 0 & 0 & 1 & 0 & 0 & 0 & 0 & 1 & 0 & 1 & 0 & 0 & 0 & 0\\
1 & 0 & 0 & 0 & 0 & 0 & 0 & 1 & 0 & 0 & 0 & 0 & 0 & 0 & 0 & 0 & 0 & 0 & 0 & 0 & 0 & 0 & 0 & 0\\
0 & 0 & 1 & 0 & 0 & 0 & 1 & 0 & 0 & 0 & 0 & 0 & 0 & 0 & 0 & 0 & 0 & 0 & 0 & 0 & 0 & 0 & 0 & 0\\
0 & 1 & 0 & 0 & 0 & 1 & 0 & 0 & 0 & 0 & 0 & 0 & 0 & 0 & 0 & 0 & 0 & 0 & 0 & 0 & 0 & 0 & 0 & 0\\
0 & 0 & 0 & 0 & 1 & 0 & 0 & 0 & 0 & 0 & 0 & 0 & 0 & 0 & 0 & 1 & 0 & 0 & 0 & 0 & 0 & 0 & 0 & 0\\
1 & 0 & 0 & 0 & 0 & 0 & 0 & 0 & 0 & 0 & 0 & 0 & 0 & 0 & 0 & 0 & 0 & 0 & 0 & 0 & 0 & 0 & 0 & 0\\
0 & 1 & 0 & 0 & 0 & 0 & 0 & 0 & 0 & 0 & 0 & 0 & 0 & 0 & 0 & 0 & 0 & 0 & 0 & 0 & 0 & 0 & 0 & 0\\
0 & 0 & 1 & 0 & 0 & 0 & 0 & 0 & 0 & 0 & 0 & 0 & 0 & 0 & 0 & 0 & 0 & 0 & 0 & 0 & 0 & 0 & 0 & 0\\
0 & 0 & 0 & 1 & 0 & 0 & 0 & 0 & 0 & 0 & 0 & 0 & 0 & 0 & 0 & 0 & 0 & 0 & 0 & 0 & 0 & 0 & 0 & 0\\
0 & 0 & 0 & 1 & 0 & 0 & 1 & 0 & 0 & 0 & 0 & 0 & 0 & 0 & 0 & 0 & 1 & 0 & 0 & 0 & 0 & 1 & 0 & 0\\
0 & 0 & 0 & 0 & 0 & 0 & 0 & 0 & 1 & 0 & 0 & 0 & 0 & 0 & 1 & 0 & 1 & 0 & 1 & 0 & 0 & 0 & 0 & 0\\
0 & 0 & 0 & 0 & 0 & 0 & 0 & 1 & 0 & 1 & 0 & 0 & 0 & 1 & 0 & 0 & 0 & 1 & 0 & 0 & 0 & 0 & 0 & 0\\
1 & 0 & 0 & 0 & 0 & 1 & 0 & 1 & 0 & 0 & 0 & 0 & 0 & 0 & 0 & 0 & 0 & 0 & 0 & 0 & 1 & 0 & 0 & 0\\
0 & 0 & 0 & 0 & 0 & 0 & 0 & 0 & 0 & 0 & 0 & 0 & 1 & 0 & 0 & 0 & 0 & 0 & 0 & 1 & 0 & 0 & 0 & 0\\
0 & 0 & 0 & 0 & 0 & 0 & 0 & 0 & 0 & 0 & 0 & 0 & 0 & 0 & 1 & 0 & 0 & 0 & 1 & 0 & 0 & 0 & 0 & 0\\
0 & 0 & 0 & 0 & 0 & 0 & 0 & 0 & 0 & 0 & 0 & 0 & 0 & 1 & 0 & 0 & 0 & 1 & 0 & 0 & 0 & 0 & 0 & 0\\
0 & 0 & 0 & 1 & 0 & 0 & 0 & 0 & 0 & 0 & 0 & 0 & 0 & 0 & 0 & 0 & 1 & 0 & 0 & 0 & 0 & 0 & 0 & 0\\
0 & 0 & 0 & 0 & 0 & 0 & 0 & 0 & 0 & 0 & 0 & 0 & 1 & 0 & 0 & 0 & 0 & 0 & 0 & 0 & 0 & 0 & 0 & 0\\
0 & 0 & 0 & 0 & 0 & 0 & 0 & 0 & 0 & 0 & 0 & 0 & 0 & 1 & 0 & 0 & 0 & 0 & 0 & 0 & 0 & 0 & 0 & 0\\
0 & 0 & 0 & 0 & 0 & 0 & 0 & 0 & 0 & 0 & 0 & 0 & 0 & 0 & 1 & 0 & 0 & 0 & 0 & 0 & 0 & 0 & 0 & 0\\
0 & 0 & 0 & 0 & 0 & 0 & 0 & 0 & 0 & 0 & 0 & 0 & 0 & 0 & 0 & 1 & 0 & 0 & 0 & 0 & 0 & 0 & 0 & 0
\end{bmatrix}
\]
\caption{Matrix representation of $\nu$ and $\omega$.}
\label{figure_matrix_rep}
\end{figure}
\begin{figure}[H]
\centering
\begin{tikzpicture}[scale=0.65]
\centering

\TAI{-1}{0}{0}{1};

\SSTAI{3.2}{0}{0}{1};

\SSSSTAI{10}{0}{0}{1};
\draw[line width=2pt, color=black, ->] (1.5,1.5) -- (2.5,1.5);
\draw[line width=2pt, color=black, ->] (8.2,1.5) -- (9.2,1.5);

\filldraw[black] (2,2) circle (0pt) node{$\omega^2$};
\filldraw[black] (8.7,2) circle (0pt) node{$\omega^2$};

\filldraw[black] (-1,-0.4) circle (0pt) node{$(0,0)$};
\filldraw[black] (-1,0) circle (3pt) ;
\filldraw[black] (3.2,0) circle (3pt) ;
\filldraw[black] (10,0) circle (3pt) ;

\draw[line width=2pt, color=black, ->] (3.2+1.61803398875/2,1.61803398875/2) -- (3.2+1.61803398875+0.5,1.61803398875/2) -- (3.2+1.61803398875+0.5,1.61803398875+0.5) -- (3.2+1.61803398875/2,1.61803398875+0.5) -- (3.2+1.61803398875/2,1.61803398875/2+1.61803398875+1) -- (3.2+1.61803398875/2+1.61803398875+1,1.61803398875/2+1.61803398875+1) -- (3.2+1.61803398875/2+1.61803398875+1,1.61803398875/2);
\draw[line width=2pt, color=black, ->] (10+1.61803398875/2,1.61803398875/2) -- (10+1.61803398875+0.5,1.61803398875/2) -- (10+1.61803398875+0.5,1.61803398875+0.5) -- (10+1.61803398875/2,1.61803398875+0.5) -- (10+1.61803398875/2,1.61803398875/2+1.61803398875+1) -- (10+1.61803398875/2+1.61803398875+1,1.61803398875/2+1.61803398875+1) -- (10+1.61803398875/2+1.61803398875+1,1.61803398875/2) -- (10+3*1.61803398875+1.5,1.61803398875/2) -- (10+3*1.61803398875+1.5,1.61803398875+0.5) -- (10+2.5*1.61803398875+1,1.61803398875+0.5)-- (10+2.5*1.61803398875+1,1.5*1.61803398875+1) -- (10+3*1.61803398875+1.5,1.5*1.61803398875+1) -- (10+3*1.61803398875+1.5,3*1.61803398875+1.5) -- (10+2.5*1.61803398875+1,3*1.61803398875+1.5) -- (10+2.5*1.61803398875+1,2.5*1.61803398875+1) -- (10+1.5*1.61803398875+1,2.5*1.61803398875+1) -- (10+1.5*1.61803398875+1,3*1.61803398875+1.5) -- (10+1.61803398875+0.5,3*1.61803398875+1.5) -- (10+1.61803398875+0.5,2.5*1.61803398875+1) -- (10+0.5*1.61803398875,2.5*1.61803398875+1) -- (10+0.5*1.61803398875,3.5*1.61803398875+2) -- (10+1.61803398875+0.5,3.5*1.61803398875+2) -- (10+1.61803398875+0.5,4*1.61803398875+2.5) -- (10+0.5*1.61803398875,4*1.61803398875+2.5) -- (10+0.5*1.61803398875,4.5*1.61803398875+3) -- (10+0.5*1.61803398875,4.5*1.61803398875+3) -- (10+1.5*1.61803398875+1,4.5*1.61803398875+3) -- (10+1.5*1.61803398875+1,3.5*1.61803398875+2) -- (10+3*1.61803398875+1.5,3.5*1.61803398875+2) -- (10+3*1.61803398875+1.5,4*1.61803398875+2.5) -- (10+2.5*1.61803398875+1,4*1.61803398875+2.5) -- (10+2.5*1.61803398875+1,4.5*1.61803398875+3) -- (10+4.5*1.61803398875+3,4.5*1.61803398875+3) -- (10+4.5*1.61803398875+3,3.5*1.61803398875+2) -- (10+4*1.61803398875+2.5,3.5*1.61803398875+2) -- (10+4*1.61803398875+2.5,4*1.61803398875+2.5) -- (10+3.5*1.61803398875+2,4*1.61803398875+2.5) -- (10+3.5*1.61803398875+2,2.5*1.61803398875+1) -- (10+4*1.61803398875+2.5,2.5*1.61803398875+1) -- (10+4*1.61803398875+2.5,3*1.61803398875+1.5) -- (10+4.5*1.61803398875+3,3*1.61803398875+1.5) -- (10+4.5*1.61803398875+3,1.5*1.61803398875+1) -- (10+3.5*1.61803398875+2,1.5*1.61803398875+1) -- (10+3.5*1.61803398875+2,0.5*1.61803398875) -- (10+4*1.61803398875+2.5,0.5*1.61803398875) -- (10+4*1.61803398875+2.5,1.61803398875+0.5) -- (10+4.5*1.61803398875+3,1.61803398875+0.5) -- (10+4.5*1.61803398875+3,0.5*1.61803398875);
\end{tikzpicture}
\caption{A neat visual of the tessellation of the first quadrant of the plane.} 
\label{figure_tesselation_quater_neat}
\end{figure}

\section{The Fibonacci Space-Filling Curve}

In this section we generate a space-filling curve using $\omega$.

\begin{theorem}\label{t_fibonacci_sfc} The substitution $\omega$ induces a continuous surjection (i.e. a space-filling curve) $F:[0,1]\mapsto[0,1]\times[0,1]$. 
\end{theorem}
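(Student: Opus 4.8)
The plan is to realize $F$ as the uniform limit of a sequence of finite approximating curves coming from the supertiles $\omega^k(D_1^+)$, taking the unit-square tile $D_1^+$ (support $[0,1]^2$, decoration running from $(0,0)$ to $(1,0)$) as the seed. For each $k$ set $P_k=\omega^k(D_1^+)$; since $\omega$ has expansion factor $\phi$, the patch $P_k$ tiles the square $[0,\phi^k]^2$. By Proposition \ref{proposition_supertile_decoration} its decoration is a single curve $C_k$ traversing every tile of $P_k$ exactly once and inducing the total order $\lesssim$ on $P_k$. I would parametrize $C_k$ over $[0,1]$ by assigning to each tile $t\in P_k$ a subinterval $I_t^{(k)}$ whose length is proportional to the area $|S_t|$ of its support $S_t:=\sss t$ (tiles taken in the order $\lesssim$, each tile's decoration traversed at constant speed), and then define $F_k=\phi^{-k}C_k:[0,1]\to[0,1]^2$.

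The crux is the compatibility between consecutive levels, and this is exactly where the area-proportional parametrization is needed. Writing $P_{k+1}=\bigcup_{t\in P_k}\omega(t)$, each tile $t$ is replaced by the sub-patch $\omega(t)$ occupying $\phi S_t$, so after rescaling by $\phi^{-(k+1)}$ every child lands inside the rescaled parent cell $R_t^{(k)}:=\phi^{-k}S_t$. Because substitution multiplies area exactly by $\phi^2$, one has $\sum_{t'\in\omega(t)}|S_{t'}|=\phi^2|S_t|$, so the children's level-$(k+1)$ parameter intervals fill the parent's interval $I_t^{(k)}$ exactly; and because the end vertex of the decoration of $t_i$ equals the start vertex of the decoration of $t_{i+1}$ for consecutive tiles, applying $\omega$ scales these shared endpoints by $\phi$ (Proposition \ref{proposition_supertile_decoration}, using translation-equivariance of $\omega$), so the blocks $\omega(t_i)$ concatenate into $C_{k+1}$ in the order $\lesssim$. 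Hence the parameter intervals nest, $I_t^{(k)}=\bigsqcup_{t'\in\omega(t)}I_{t'}^{(k+1)}$, and for every $s\in I_t^{(k)}$ both $F_k(s)$ and $F_{k+1}(s)$ lie in $R_t^{(k)}$, giving $\|F_k-F_{k+1}\|_\infty\le\max_t\operatorname{diam}R_t^{(k)}\le\sqrt2\,\phi^{1-k}$. This bound is summable, so $\{F_k\}$ is uniformly Cauchy and I set $F=\lim_k F_k$.

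It then remains to check the three required properties. Continuity of each $F_k$ follows from the single-curve property: consecutive tiles share the relevant vertex, so the constant-speed pieces glue continuously across interval endpoints, and $F$ is continuous as a uniform limit of continuous maps. For surjectivity I would use that $F([0,1])$ is compact, hence closed, and show it is dense: for large $k$ the cells $R_t^{(k)}$ cover $[0,1]^2$ (their union is $\phi^{-k}[0,\phi^k]^2=[0,1]^2$) with diameters $\le\sqrt2\,\phi^{1-k}$, so any $z\in[0,1]^2$ lies in some $R_t^{(k)}$; choosing $s\in I_t^{(k)}$ gives $|F_k(s)-z|\le\operatorname{diam}R_t^{(k)}$ and $|F(s)-F_k(s)|\le\sum_{j\ge k}\sqrt2\,\phi^{1-j}$, both tending to $0$, whence $z$ lies in the closed image.

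I expect the main obstacle to be the level-compatibility step rather than the limit or covering arguments. One must choose the parametrization so that the preimages nest, and this requires verifying both that areas scale by exactly $\phi^2$ (so the interval lengths match) and that the orders induced on $P_k$ and $P_{k+1}$ are consistent (so the matching blocks are contiguous and correctly ordered). Once this nesting is in hand, uniform convergence, continuity, and surjectivity are routine.
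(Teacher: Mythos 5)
Your proof is correct, but it takes a genuinely different route from the paper's. The paper never constructs approximating curves: it defines $F$ pointwise via Cantor's intersection theorem, mapping the nested chain of intervals $I^0_1\supseteq I^1_{m_1}\supseteq\cdots$ containing $x$ to the corresponding nested chain of cells $J^0_1\supseteq J^1_{m_1}\supseteq\cdots$ (built from the scaled patches $\phi^{-k-1}\omega^k(A^+_1)$, which coincide with your $\phi^{-k}\omega^k(D^+_1)$ up to an index shift, since $\omega(D^+_1)=A^+_1$), and then proves continuity from the mesh sizes $g_k,h_k\downarrow 0$ and surjectivity by running the Cantor argument backwards from a nested chain of cells. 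You instead build genuine parametrized curves $F_k$ (constant speed on area-proportional intervals), prove the explicit uniform Cauchy estimate $\lVert F_k-F_{k+1}\rVert_\infty\le\sqrt2\,\phi^{1-k}$, and obtain continuity from the uniform limit and surjectivity from compactness plus density. Two things your version buys: first, well-definedness is automatic, whereas in the paper's construction a point $x$ shared by two adjacent intervals belongs to two distinct nested chains, and the claim ``$F$ is well-defined'' silently uses that consecutive cells meet at the shared decoration endpoint --- exactly the adjacency you make explicit; second, you isolate and verify the level-compatibility $I^{(k)}_t=\bigsqcup_{t'\in\omega(t)}I^{(k+1)}_{t'}$ (area scaling by $\phi^2$ plus consistency of the orders $\lesssim_k$, via Proposition \ref{proposition_supertile_decoration}), which the paper's nesting hypothesis (i) assumes without comment. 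What the paper's route buys in exchange is brevity --- no parametrizations, no Cauchy estimate --- and a definition of $F$ that does not depend on any choice of speed along the decorations; your $F_k$ are, in effect, rigorous versions of the approximating polygons the paper introduces only afterwards as illustrations (Definition \ref{d_nth-approximant-of-F}).
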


\begin{proof}
Consider the collection of scaled patches $\{\phi^{-k-1}\omega^k(A^+_1):\ k\in\NN\}$ where $\phi=(1+\sqrt{5})/2$. For each $k$, the scaled patch $\phi^{-k-1}\omega^k(A^+_1)$ defines a partition of $[0,1]\times[0,1]$ and contains $\cF^2_{k+2}$ many tiles where $\cF_i$ is the $i$-th Fibonacci number. Additionally, there is a (scaled) decoration over the patch which visits every tile exactly once and induces a total order $\lesssim_k$ amongst them. Let $J^k_1,\dots,J^k_{\cF_{k+2}}$ denote the (scaled) tiles in $\phi^{-k-1}\omega^k(A^+_1)$ such that $J^k_i\lesssim_k J^k_j$ whenever $i<j$ (Figure \ref{Figure_Labelling_the_Rectangles_in_the_partititons_of_the_unit_square}).
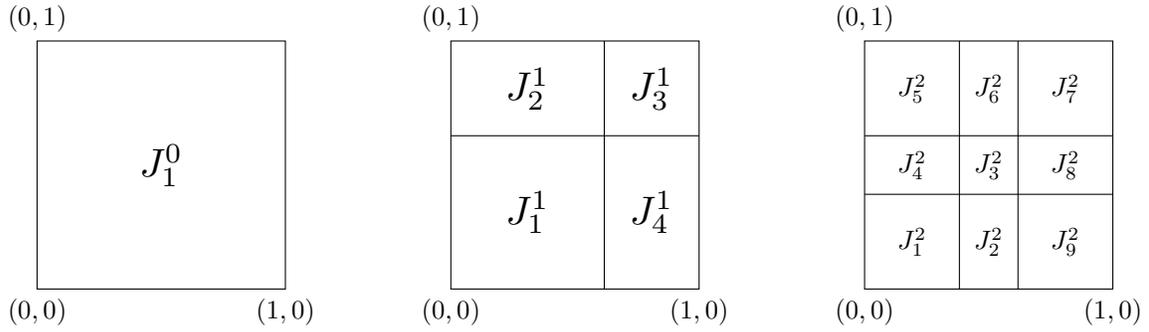
\begin{figure}[H]
\centering
\begin{tikzpicture}[scale=0.55]

\draw(0-10,0) rectangle (6-10,6);
\node[scale=1.5] at (3-10,3) {$J_{1}^0$};
\node[anchor=north] at (0-10,0) {$(0,0)$};
\node[anchor=north] at (6-10,0) {$(1,0)$};
\node[anchor=south] at (0-10,6) {$(0,1)$};

\foreach \index in {0,10}{
\draw (0+\index,0) rectangle (6+\index,6);
\draw (\index+6/1.6180339887498948482045868343656381177203091798057628621354486227,0) -- (\index+6/1.6180339887498948482045868343656381177203091798057628621354486227,6);
\draw (\index,6/1.6180339887498948482045868343656381177203091798057628621354486227) -- (\index+6,6/1.6180339887498948482045868343656381177203091798057628621354486227);
\node[anchor=north] at (0+\index,0) {$(0,0)$};
\node[anchor=north] at (6+\index,0) {$(1,0)$};
\node[anchor=south] at (0+\index,6) {$(0,1)$};

}
\draw (10+6/2.6180339887498948482045868343656381177203091798057628621354486227,0) -- (10+6/2.6180339887498948482045868343656381177203091798057628621354486227,6);
\draw (10,6/2.6180339887498948482045868343656381177203091798057628621354486227) -- (10+6,6/2.6180339887498948482045868343656381177203091798057628621354486227);

\node[scale=1.5] at (3/1.6180339887498948482045868343656381177203091798057628621354486227,3/1.6180339887498948482045868343656381177203091798057628621354486227) {$J_{1}^1$};
\node[scale=1.5] at (3/1.6180339887498948482045868343656381177203091798057628621354486227,3*1.6180339887498948482045868343656381177203091798057628621354486227) {$J_{2}^1$};
\node[scale=1.5] at (3*1.6180339887498948482045868343656381177203091798057628621354486227,3/1.6180339887498948482045868343656381177203091798057628621354486227) {$J_{4}^1$};
\node[scale=1.5] at (3*1.6180339887498948482045868343656381177203091798057628621354486227,3*1.6180339887498948482045868343656381177203091798057628621354486227) {$J_{3}^1$};

\node[scale=1] at (10+3/2.6180339887498948482045868343656381177203091798057628621354486227,3/2.6180339887498948482045868343656381177203091798057628621354486227) {$J_{1}^2$};
\node[scale=1] at (10+3/2.6180339887498948482045868343656381177203091798057628621354486227,3) {$J_{4}^2$};
\node[scale=1] at (10+3,3/2.6180339887498948482045868343656381177203091798057628621354486227) {$J_{2}^2$};
\node[scale=1] at (10+3,3) {$J_{3}^2$};
\node[scale=1] at (10+3/2.6180339887498948482045868343656381177203091798057628621354486227,3*1.6180339887498948482045868343656381177203091798057628621354486227) {$J_{5}^2$};
\node[scale=1] at (10+3,3*1.6180339887498948482045868343656381177203091798057628621354486227) {$J_{6}^2$};
\node[scale=1] at (10+3*1.6180339887498948482045868343656381177203091798057628621354486227,3/2.6180339887498948482045868343656381177203091798057628621354486227) {$J_{9}^2$};
\node[scale=1] at (10+3*1.6180339887498948482045868343656381177203091798057628621354486227,3) {$J_{8}^2$};
\node[scale=1] at (10+3*1.6180339887498948482045868343656381177203091798057628621354486227,3*1.6180339887498948482045868343656381177203091798057628621354486227) {$J_{7}^2$};
\end{tikzpicture}
\caption{Partitions $\phi^{-k-1}\omega^k(A^+_1)$ for $k=0,1,2$ are demonstrated.}
\label{Figure_Labelling_the_Rectangles_in_the_partititons_of_the_unit_square}
\end{figure}
Next, we define a sequence of partitions of $[0,1]$. For each $k\in\NN$, define a partition $\{I^k_1,\dots,I^k_{\cF^2_{k+2}}\}$ such that
\begin{itemize}
    \item[(i)] Length of $I^k_i$ equals to area of $J^k_i$,
    \item[(ii)] $I^k_i$ is a closed interval for each $i\in\{1,\dots,\cF^2_{k+2}\}$,
    \item[(iii)] $\min (I^k_i) < \min (I^k_j)$ if and only if $i<j$, for each $i,j\in\{1,\dots,\cF^2_{k+2}\}$.
\end{itemize}
The partitions $\{I^0_1\}$, $\{I^1_1, I^1_2, I^1_3, I^1_4\}$ and $\{I^2_1, I^2_2, I^2_3, I^2_4, I^2_5, I^2_6, I^2_7, I^2_8, I^2_9\}$ are illustrated in Figure \ref{Figure_Labelling_the_Rectangles_in_the_partititons_of_the_unit_interval}.

\begin{figure}[H]
\centering
\begin{tikzpicture}[scale=0.82]
\foreach \inx in {0,-3,-6}{
\draw (0,0+\inx) -- (8*1.618 + 5,0+\inx);
\draw(0,0.2+\inx) -- (0,-0.2+\inx);
\draw(8*1.618 + 5,0.2+\inx) -- (8*1.618 + 5,-0.2+\inx);
\node[anchor=north] at (0,-0.2+\inx) {$0$};
\node[anchor=north] at (8*1.618 + 5,-0.2+\inx) {$1$};
}

\foreach \inx in {-3,-6}{
\draw(3*1.618+2,0.2+\inx) -- (3*1.618+2,-0.2+\inx);
\draw(5*1.618+3,0.2+\inx) -- (5*1.618+3,-0.2+\inx);
\draw(6*1.618+4,0.2+\inx) -- (6*1.618+4,-0.2+\inx);
}

\draw(1*1.618+1,0.2-6) -- (1*1.618+1,-0.2-6);
\draw(2*1.618+1,0.2-6) -- (2*1.618+1,-0.2-6);
\draw(2*1.618+2,0.2-6) -- (2*1.618+2,-0.2-6);
\draw(4*1.618+3,0.2-6) -- (4*1.618+3,-0.2-6);
\draw(7*1.618+4,0.2-6) -- (7*1.618+4,-0.2-6);

\node[scale=1.5, anchor=south] at (4*1.618 + 2.5,0) {$I^0_1$};

\node[scale=1.5, anchor=south] at (1.5*1.618+1,-3) {$I^1_1$};
\node[scale=1.5, anchor=south] at (4*1.618+2.5,-3) {$I^1_2$};
\node[scale=1.5, anchor=south] at (5.5*1.618+3.5,-3) {$I^1_3$};
\node[scale=1.5, anchor=south] at (7*1.618+4.5,-3) {$I^1_4$};

\node[anchor=south] at (0.5*1.618+0.5,-6) {$I^2_1$};
\node[anchor=south] at (1.5*1.618+1,-6) {$I^2_2$};
\node[anchor=south] at (2*1.618+1.5,-6) {$I^2_3$};
\node[anchor=south] at (2.5*1.618+2,-6) {$I^2_4$};
\node[anchor=south] at (3.5*1.618+2.5,-6) {$I^2_5$};
\node[anchor=south] at (4.5*1.618+3,-6) {$I^2_6$};
\node[anchor=south] at (5.5*1.618+3.5,-6) {$I^2_7$};
\node[anchor=south] at (6.5*1.618+4,-6) {$I^2_8$};
\node[anchor=south] at (7.5*1.618+4.5,-6) {$I^2_9$};

\end{tikzpicture}

\caption{Partitions $\{I^0_1\}$, $\{I^1_1, I^1_2, I^1_3, I^1_4\}$ and $\{I^2_1, I^2_2, I^2_3, I^2_4, I^2_5, I^2_6, I^2_7, I^2_8, I^2_9\}$ are depicted.}
\label{Figure_Labelling_the_Rectangles_in_the_partititons_of_the_unit_interval}
\end{figure}

For each $k\in\NN$, define a map $F_k:\{I^k_i:\ i=1,\dots,\cF_{k+2}^2\}\mapsto \{J^k_i:\ i=1,\dots,\cF_{k+2}^2\}$ such that $F_k(I^k_i)=J^k_i$ for $i\in\{1,\dots,\cF_{k+2}^2\}$. By Cantor's intersection theorem, for each $x\in[0,1]$, there exists a sequence of nested intervals $\{I^k_{m_k}:\ k\in\ZZ^+\ \text{and}\ m_k\in\{1,\dots,\cF^2_{k+2}\}\}$ such that 
\begin{itemize}
    \item[(i)] $I^0_1\supseteq I^1_{m_1}\supseteq I^2_{m_2}\supseteq\dots$, 
    \item[(ii)] $\bigcap\limits_{k=1}I^k_{m_k}=\{x\}$. 
\end{itemize}
Similarly, for each $y\in[0,1]\times[0,1]$, there exists a sequence of nested rectangles $\{J^k_{m_k}:\ k\in\ZZ^+\ \text{and}\ m_k\in\{1,\dots,\cF^2_{k+2}\}\}$ such that 
\begin{itemize}
    \item[(i)] $J^0_1\supseteq J^1_{m_1}\supseteq j^2_{m_2}\supseteq\dots$, 
    \item[(ii)] $\bigcap\limits_{k=1}J^k_{m_k}=\{y\}$. 
\end{itemize}
Consequently, we define a function $F:[0,1]\mapsto[0,1]\times[0,1]$ by the following relation
\[
F(x)=\bigcap\limits_{k=1}J^k_{m_k}\quad\text{for}\ x\in[0,1]\ \text{with}\ \{x\}=\bigcap\limits_{k=1}I^k_{m_k}.
\]
The function $F$ is well-defined. It is surjective because $\bigcap\limits_{k=1}I^k_{m_k}$ is a singleton for each $y\in[0,1]\times[0,1]$ with $\{y\}=\bigcap\limits_{k=1}J^k_{m_k}$, by Cantor's intersection theorem. That is, $F(\bigcap\limits_{k=1}I^k_{m_k})=y$. 

Next, we show that it is also continuous. For each $k\in\NN$, let $g_k$ denote the maximum length of the intervals $I^k_1,\dots,I^k_{\cF^2_{k+2}}$, and let $h_k$ denote the maximum diameter of the rectangles $J^k_1,\dots,J^k_{\cF^2_{k+2}}$. Note that both $g_k\downarrow0$ and $h_k\downarrow0$ as $k\to\infty$. Suppose $x\in[0,1]$ and $\epsilon>0$ are given. Choose a sufficiently large $N\in\NN$ such that $h_N<\epsilon$. Then $||F(x)-F(y)||<\epsilon$ whenever $|x-y|<g_N/2$, and $F$ is continuous.
\end{proof}

\paragraph{Connected Space-Filling Curves} A space-filling $f$ in $\RR^2$ is called \emph{connected} if any two subsequent rectangles that are generated on iterative partitions (i.e. images of subsequent adjacent intervals) share a common edge \cite[Definition 7.2]{bader2012space}. One of the most famous example of a connected space-filling curve is the Hilbert's space-filling curve. For example, the curves illustrated in Figure \ref{f_Hilbert_SFC_iterations} visit adjacent rectangles that share an edge respectively. Similarly, the space-filling curve $F$ constructed in the proof of Theorem \ref{t_fibonacci_sfc} is also a connected space-filling curve (Figure \ref{Figure_Labelling_the_Rectangles_in_the_partititons_of_the_unit_square}). On the other hand, Lebesgue's space-filling curve is not connected \cite{bader2012space}.

\begin{theorem}
The space-filling curve $F$ constructed in Theorem \ref{t_fibonacci_sfc} is the only connected space-filling curve that is generated by decorated tiles of tiles of $\mu_2$ depicted in Figure \ref{Figure_Fibonacci_times_Fibonacci_Substitution}. That is, it is the only connected space-filling curve that can be generated by Cartesian product of two Fibonacci substitutions.
\end{theorem}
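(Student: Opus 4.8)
The plan is to show that the twin requirements of (i) being a connected space-filling curve and (ii) arising from decorated tiles refining $\mu_2$ leave no freedom in how the four prototiles $A,B,C,D$ may be decorated, so that any such curve must coincide with $F$. First I would reformulate the problem: a connected space-filling curve generated by $\mu_2$ is the same data as an assignment, to each tile type, of an interior arc joining two of its vertices (a decoration), together with a substitution rule refining $\mu_2$ whose decorations concatenate (as in Proposition \ref{proposition_supertile_decoration}) and for which, inside every supertile, consecutive tiles in the induced order $\lesssim$ share an edge. Since the seed is the $A$-tile and the whole construction is equivariant under the dihedral symmetries of the square (these are exactly the relabelings recorded by the indices $1,2,3,4$ and the sign $\pm$), it suffices to determine, for each tile type, \emph{which pair of corners} its decoration joins.

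The first and decisive step is to show that the square tiles $A$ and $D$ must be decorated by a \emph{side} (a pair of adjacent corners) and never by a diagonal. Here I would use the combinatorics of the rule $A\mapsto A,B,C,D$: the four sub-tiles sit in a $2\times2$ block whose edge-adjacency graph is the $4$-cycle $A\!-\!B\!-\!D\!-\!C\!-\!A$, so the diagonal pairs $(A,D)$ and $(B,C)$ are \emph{non}-adjacent. Connectedness forces the order $\lesssim$ to trace a Hamiltonian path of this $4$-cycle, and every Hamiltonian path of a $4$-cycle (obtained by deleting one edge) has its two endpoints at adjacent vertices. Since the four outer corners $\mathrm{BL},\mathrm{BR},\mathrm{TL},\mathrm{TR}$ of the $A$-tile lie in the four distinct sub-tiles $A,C,B,D$ respectively, a diagonal decoration (joining $\mathrm{BL}$–$\mathrm{TR}$ or $\mathrm{TL}$–$\mathrm{BR}$) would force the curve to start and end in the non-adjacent pair $(A,D)$ or $(B,C)$, which no Hamiltonian path permits. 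Hence the $A$-tile, and likewise the $D$-tile (via $D\mapsto A$), carries a side decoration, and the chosen side determines the admissible traversal order.

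Next I would show the rectangle tiles $B$ and $C$ must be decorated by a \emph{diagonal}. The key observation is an orientation parity: since $B\mapsto A,C$ splits horizontally (with $A$ left of $C$), any admissible decoration of $B$ joins a left corner to a right corner, so it is either a horizontal side or a diagonal; dually, since $C\mapsto A,B$ splits vertically (with $A$ below $B$), any admissible decoration of $C$ joins a bottom corner to a top corner, so it is either a vertical side or a diagonal. Suppose, for contradiction, that $B$ were a horizontal side. Tracking entry and join corners and using the already-established fact that the sub-square $A$ inside $B$ is a side, one finds that the right sub-tile $C$ of $B$ would be forced to carry a \emph{horizontal} side; but $C$ can only ever carry a vertical side or a diagonal — a contradiction. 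Therefore $B$ is a diagonal, and the symmetric argument (exchanging horizontal and vertical) shows $C$ is a diagonal. I expect \textbf{this step to be the main obstacle}, because it requires a careful case check that the forced sub-tile decorations are genuinely incompatible, while ensuring the argument is not circular: the contradiction must be drawn only from the \emph{a priori} admissible decorations of $C$ (bottom-to-top arcs), not from already knowing $C$ is a diagonal.

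Finally, with the decoration \emph{types} pinned down — sides for $A,D$ and diagonals for $B,C$ — I would establish rigidity: in each supertile the entry and exit corners are determined by the parent decoration, and the concatenation condition together with the unique Hamiltonian traversal then determines the decoration of every sub-tile with no remaining choice. Carrying this out for each of the finitely many decorated types reproduces exactly the rule $\omega$ and the $24$ prototiles of Figure \ref{F_fibonacci_substitution_prototiles}, so the induced nested partitions, and hence the limit curve, coincide with those of Theorem \ref{t_fibonacci_sfc}. The residual discrete choices (the starting side of the seed and the global orientation $\pm$) act by symmetries of the square and by reversal, neither of which changes the curve, so $F$ is the unique connected space-filling curve generated by the Cartesian product of two Fibonacci substitutions.
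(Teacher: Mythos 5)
Your proposal is correct, and it is substantially more complete than the paper's own proof, which consists of a single sentence asserting that ``there is only one way of visiting tiles of 1-supertiles of $\omega$ (up to rotation) that inherits connectedness property.'' In effect, the paper takes the $24$ decorated tiles of Figure \ref{F_fibonacci_substitution_supertiles} as already given and only claims that the traversal order within each $1$-supertile is forced; your argument proves the stronger statement the theorem actually asserts, namely that among \emph{all} vertex-to-vertex decorations of the tiles of $\mu_2$, connectedness alone forces side arcs on $A$ and $D$ and diagonal arcs on $B$ and $C$, and then forces the refining substitution to be $\omega$ itself. Your three steps check out: in the $2\times2$ block the edge-adjacency graph is indeed the $4$-cycle $A\!-\!B\!-\!D\!-\!C\!-\!A$, every Hamiltonian path on $C_4$ has adjacent endpoints, and the corners $\mathrm{BL},\mathrm{TR}$ (resp. $\mathrm{TL},\mathrm{BR}$) lie in the non-adjacent pair $A,D$ (resp. $B,C$), so diagonals on $A$ are excluded, with $D$ inheriting the side type via $D\mapsto A$; the left-to-right constraint on $B$ and bottom-to-top constraint on $C$ follow because the two supertile corners on each side of $\mu_2(B)$ (resp.\ top/bottom of $\mu_2(C)$) are vertices of a single subtile; and the case analysis you flag as the main obstacle does close — e.g.\ if $B$ carried the bottom side, entry at $\mathrm{BL}$ forces the $A$-subtile's side arc to end at the unique admissible junction $(\phi,0)$, whence the $C$-subtile would need a bottom-to-bottom arc, while in the reversed orientation the contradiction lands instead on the $A$-subtile needing a diagonal, so both orientations must be (and can be) checked. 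The rigidity step is also sound: given the parent's entry and exit corners, the Hamiltonian path from the entry subtile to the exit subtile in $C_4$ is unique, and the shared-vertex junction condition then pins down each subtile's decoration, reproducing $\omega$. One caveat: your closing claim that the residual choices (seed side, global orientation) ``do not change the curve'' is too strong — they yield curves differing by a dihedral symmetry of the square or by reversal of parametrization, so uniqueness holds only up to these symmetries; but this looseness is shared by the theorem statement itself and by the paper's parenthetical ``up to rotation,'' so it is a gloss rather than a gap.
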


\begin{proof}
The proof follows by the fact that there is only one way of visiting tiles of 1-supertiles of $\omega$ (up to rotation) that inherits connectedness property. 
\end{proof}

We call the space-filling curve $F$ as the \emph{Fibonacci\ space-filling\ curve}.

\paragraph{Approximating Polygons of the Fibonacci Space-Filling Curve} We illustrate the iterative process of generating the Fibonacci space-filling curve $F$ through \emph{approximating\ polygons}, which is a notion introduced by Wunderlich \cite{wunderlich1973peano}.

\begin{definition}[Approximating Polygons]\label{d_nth-approximant-of-F}
With the same notations in the proof of Theorem \ref{t_fibonacci_sfc}, for each $k\in\ZZ^+$ and $i\in\{1,\dots,\cF^2_{k+2}\}$, denote the centre of $J^k_i$ by $x(J^k_i)$. The simple curve formed by joining points $x(J^k_1), x(J^k_2), \dots, x(J^k_{\cF^2_{k+2}})$ with straight lines respectively is called the \emph{k-th\ approximating\ polygon\ of\ $F$}.
\end{definition}
First four approximating polygons of $F$ are depicted in Figure \ref{Figure_Approximating_Polygons} and Figure \ref{figure_4th_approximating_polygon}.

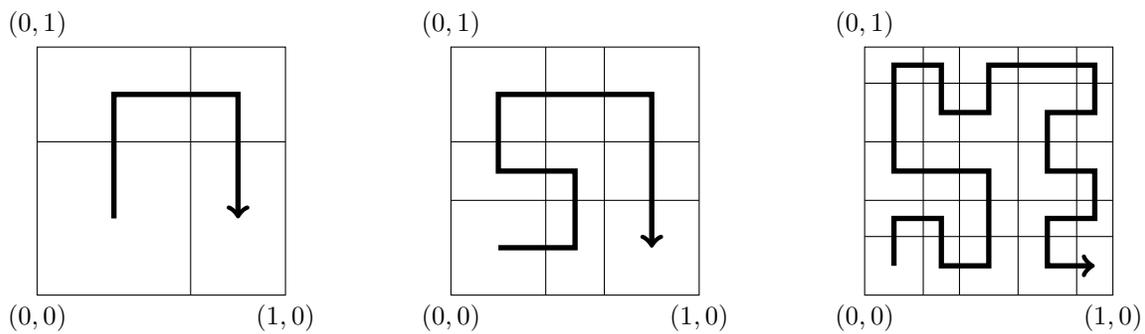
\begin{figure}[H]
\centering
\begin{tikzpicture}[scale=0.55]

\foreach \index in {0,10,20}{
\draw (0+\index,0) rectangle (6+\index,6);
\draw (\index+6/1.618,0) -- (\index+6/1.618,6);
\draw (\index,6/1.618) -- (\index+6,6/1.618);
\node[anchor=north] at (0+\index,0) {$(0,0)$};
\node[anchor=north] at (6+\index,0) {$(1,0)$};
\node[anchor=south] at (0+\index,6) {$(0,1)$};

}
\foreach \index in {10,20}{
\draw (\index+6/2.618,0) -- (\index+6/2.618,6);
\draw (\index,6/2.618) -- (\index+6,6/2.618);
}
\draw (20,6/4.236) -- (20+6,6/4.236);
\draw (20,6/4.236+6/1.618) -- (20+6,6/4.236+6/1.618);
\draw (20+6/4.236,0) -- (20+6/4.236,6);
\draw (20+6/4.236+6/1.618,0) -- (20+6/4.236+6/1.618,6);

\draw[line width=2pt, color=black, ->] (3/1.618,3/1.618) -- (3/1.618,3/1.618+3) -- (3/1.618+3,3/1.618+3) -- (3/1.618+3,3/1.618);
\draw[line width=2pt, color=black, ->] (10+3/2.618,3/2.618) -- (10+3/2.618+3/1.618,3/2.618) -- (10+3/2.618+3/1.618,3/2.618+3/1.618) -- (10+3/2.618,3/2.618+3/1.618) -- (10+3/2.618,3/2.618+3/1.618+3/1.618) -- (10+3/2.618+3/1.618+3/1.618,3/2.618+3/1.618+3/1.618) -- (10+3/2.618+3/1.618+3/1.618,3/2.618);
\draw[line width=2pt, color=black, ->] (20+3/4.236,3/4.236) -- (20+3/4.236,3/4.236+3/2.618) -- (20+3/4.236+3/2.618,3/4.236+3/2.618) -- (20+3/4.236+3/2.618,3/4.236) -- (20+3/4.236+3/2.618+3/2.618,3/4.236) -- (20+3/4.236+3/2.618+3/2.618,3/4.236+3/2.618+3/2.618) -- (20+3/4.236,3/4.236+3/2.618+3/2.618) -- (20+3/4.236,3/4.236+3/2.618+3/2.618+6/4.236+3/2.618) -- (20+3/4.236+3/2.618,3/4.236+3/2.618+3/2.618+6/4.236+3/2.618) -- (20+3/4.236+3/2.618,3/4.236+3/2.618+3/2.618+6/4.236) -- (20+3/4.236+3/2.618+3/2.618,3/4.236+3/2.618+3/2.618+6/4.236) -- (20+3/4.236+3/2.618+3/2.618,3/4.236+3/2.618+3/2.618+6/4.236+3/2.618) -- (20+3/4.236+3/2.618+3/2.618+6/4.236+3/2.618,3/4.236+3/2.618+3/2.618+6/4.236+3/2.618) -- (20+3/4.236+3/2.618+3/2.618+6/4.236+3/2.618,3/4.236+3/2.618+3/2.618+6/4.236) -- (20+3/4.236+3/2.618+3/2.618+6/4.236,3/4.236+3/2.618+3/2.618+6/4.236) -- (20+3/4.236+3/2.618+3/2.618+6/4.236,3/4.236+3/2.618+3/2.618) -- (20+3/4.236+3/2.618+3/2.618+6/4.236+3/2.618,3/4.236+3/2.618+3/2.618) -- (20+3/4.236+3/2.618+3/2.618+6/4.236+3/2.618,3/4.236+3/2.618) -- (20+3/4.236+3/2.618+3/2.618+6/4.236,3/4.236+3/2.618) -- (20+3/4.236+3/2.618+3/2.618+6/4.236,3/4.236) -- (20+3/4.236+3/2.618+3/2.618+6/4.236+3/2.618,3/4.236); 

\end{tikzpicture}
\caption{First three approximating polygons of $F$.}
\label{Figure_Approximating_Polygons}
\end{figure}

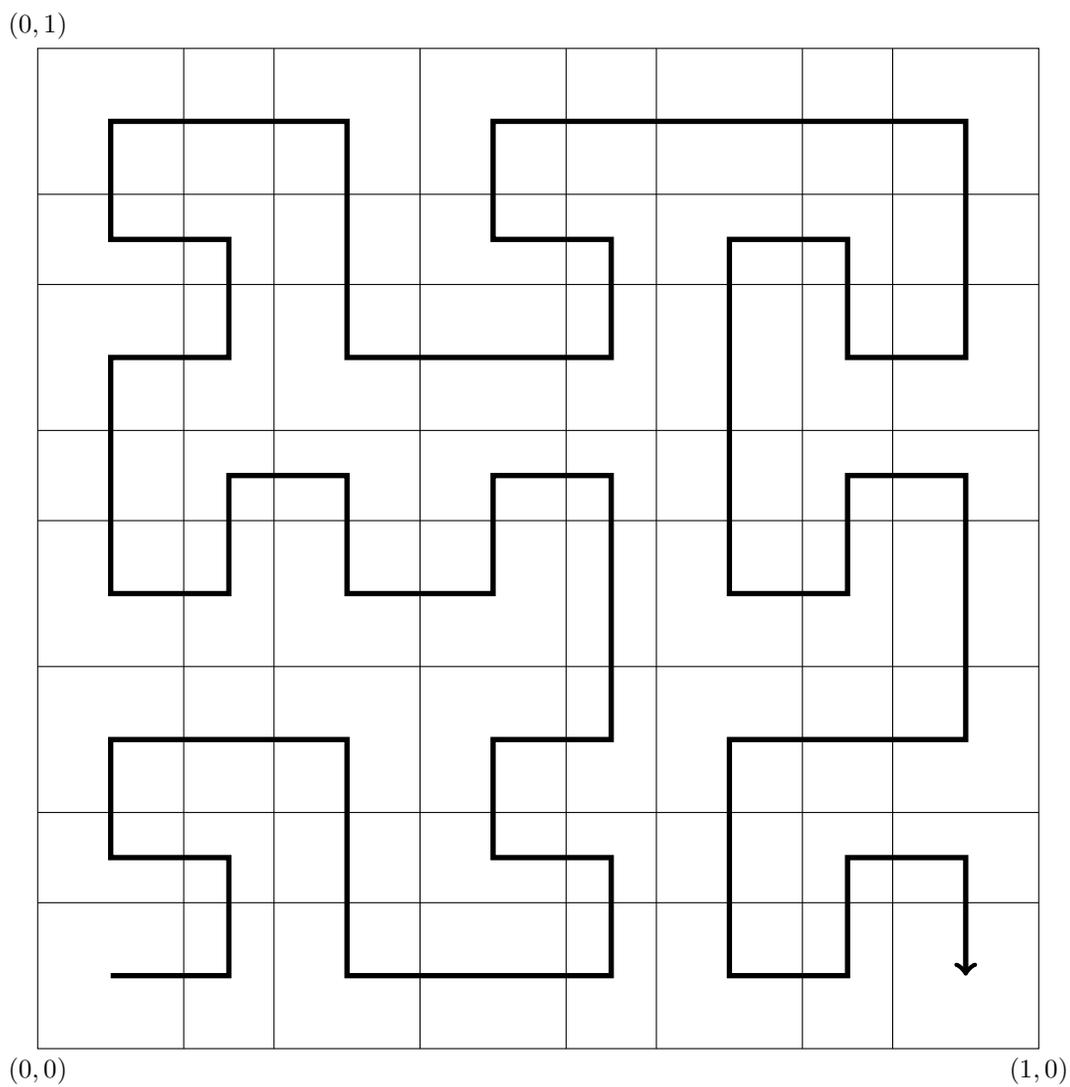
\begin{figure}[H]
\centering
\begin{tikzpicture}[scale=1.2]
\centering
\draw (10,0) rectangle (10+5*1.61803398875+3,5*1.61803398875+3);

\node[anchor=north] at (10,0) {$(0,0)$};
\node[anchor=north] at (10+5*1.61803398875+3,0) {$(1,0)$};
\node[anchor=south] at (10,5*1.61803398875+3) {$(0,1)$};

\foreach \index in {1.61803398875,2.61803398875,2*1.61803398875+1,3*1.61803398875+1,3*1.61803398875+2,4*1.61803398875+2,4*1.61803398875+3}{
\draw (10+\index,0) -- (10+\index,5*1.61803398875+3);
\draw (10+0,\index) -- (10+5*1.61803398875+3,\index);
}

\draw[line width=2pt, color=black, ->] (10+1.61803398875/2,1.61803398875/2) -- (10+1.61803398875+0.5,1.61803398875/2) -- (10+1.61803398875+0.5,1.61803398875+0.5) -- (10+1.61803398875/2,1.61803398875+0.5) -- (10+1.61803398875/2,1.61803398875/2+1.61803398875+1) -- (10+1.61803398875/2+1.61803398875+1,1.61803398875/2+1.61803398875+1) -- (10+1.61803398875/2+1.61803398875+1,1.61803398875/2) -- (10+3*1.61803398875+1.5,1.61803398875/2) -- (10+3*1.61803398875+1.5,1.61803398875+0.5) -- (10+2.5*1.61803398875+1,1.61803398875+0.5)-- (10+2.5*1.61803398875+1,1.5*1.61803398875+1) -- (10+3*1.61803398875+1.5,1.5*1.61803398875+1) -- (10+3*1.61803398875+1.5,3*1.61803398875+1.5) -- (10+2.5*1.61803398875+1,3*1.61803398875+1.5) -- (10+2.5*1.61803398875+1,2.5*1.61803398875+1) -- (10+1.5*1.61803398875+1,2.5*1.61803398875+1) -- (10+1.5*1.61803398875+1,3*1.61803398875+1.5) -- (10+1.61803398875+0.5,3*1.61803398875+1.5) -- (10+1.61803398875+0.5,2.5*1.61803398875+1) -- (10+0.5*1.61803398875,2.5*1.61803398875+1) -- (10+0.5*1.61803398875,3.5*1.61803398875+2) -- (10+1.61803398875+0.5,3.5*1.61803398875+2) -- (10+1.61803398875+0.5,4*1.61803398875+2.5) -- (10+0.5*1.61803398875,4*1.61803398875+2.5) -- (10+0.5*1.61803398875,4.5*1.61803398875+3) -- (10+0.5*1.61803398875,4.5*1.61803398875+3) -- (10+1.5*1.61803398875+1,4.5*1.61803398875+3) -- (10+1.5*1.61803398875+1,3.5*1.61803398875+2) -- (10+3*1.61803398875+1.5,3.5*1.61803398875+2) -- (10+3*1.61803398875+1.5,4*1.61803398875+2.5) -- (10+2.5*1.61803398875+1,4*1.61803398875+2.5) -- (10+2.5*1.61803398875+1,4.5*1.61803398875+3) -- (10+4.5*1.61803398875+3,4.5*1.61803398875+3) -- (10+4.5*1.61803398875+3,3.5*1.61803398875+2) -- (10+4*1.61803398875+2.5,3.5*1.61803398875+2) -- (10+4*1.61803398875+2.5,4*1.61803398875+2.5) -- (10+3.5*1.61803398875+2,4*1.61803398875+2.5) -- (10+3.5*1.61803398875+2,2.5*1.61803398875+1) -- (10+4*1.61803398875+2.5,2.5*1.61803398875+1) -- (10+4*1.61803398875+2.5,3*1.61803398875+1.5) -- (10+4.5*1.61803398875+3,3*1.61803398875+1.5) -- (10+4.5*1.61803398875+3,1.5*1.61803398875+1) -- (10+3.5*1.61803398875+2,1.5*1.61803398875+1) -- (10+3.5*1.61803398875+2,0.5*1.61803398875) -- (10+4*1.61803398875+2.5,0.5*1.61803398875) -- (10+4*1.61803398875+2.5,1.61803398875+0.5) -- (10+4.5*1.61803398875+3,1.61803398875+0.5) -- (10+4.5*1.61803398875+3,0.5*1.61803398875);
\end{tikzpicture}
\caption{Fourth approximating polygon of $F$. Figure is scaled up for demonstration purposes.} 
\label{figure_4th_approximating_polygon}
\end{figure}

\bibliographystyle{abbrv}
\bibliography{refs.bib}

\begin{thebibliography}{10}

\bibitem{bader2012space}
M.~Bader.
\newblock {\em Space-filling curves: an introduction with applications in scientific computing}, volume~9.
\newblock Springer Science \& Business Media, 2012.

\bibitem{b_cantor}
G.~Cantor.
\newblock Ein beitrag zur mannigfaltigkeitslehre.
\newblock {\em Journal f{\"u}r die reine und angewandte Mathematik (Crelles Journal)}, 84:242--258, 1878.

\bibitem{b_rao_zang_2}
X.-R. Dai, H.~Rao, and S.-Q. Zhang.
\newblock Space-filling curves of self-similar sets (ii): edge-to-trail substitution rule.
\newblock {\em Nonlinearity}, 32(5):1772, 2019.

\bibitem{b_dekking}
F.~M. Dekking.
\newblock Recurrent sets.
\newblock {\em Advances in mathematics}, 44(1):78--104, 1982.

\bibitem{frettloh2017substitution}
D.~Frettl{\"o}h, A.~L. Say-awen, and M.~L. A.~N. De~Las~Pe{\~n}as.
\newblock Substitution tilings with dense tile orientations and n-fold rotational symmetry.
\newblock {\em Indagationes Mathematicae}, 28(1):120--131, 2017.

\bibitem{b_hahn_2}
H.~Hahn.
\newblock Mengentheoretische charakterisierung der stetigen kurven.
\newblock {\em Sitzungsber Akad. Wiss. Wien}, 123:2433--2487, 1914.

\bibitem{b_hahn_1}
H.~Hahn.
\newblock {\"U}ber die allgemeinste ebene punktmenge, die stetiges bild einer strecke ist.
\newblock {\em Jahresbericht der Deutschen Mathematiker-Vereinigung}, 23:318--322, 1914.

\bibitem{b_hahn_3}
H.~Hahn.
\newblock {\"U}ber stetige streckenbilder.
\newblock {\em Atti del Congresso, Internazionale dei Mathematici, Bologna}, 6:217--220, 1928.

\bibitem{b_hausdorff_felix}
F.~Hausdorff.
\newblock {\em Mengenlehre 3rd edition}.
\newblock Berlin-Leipzig: de Gryter, 1927.

\bibitem{b_lebesgue}
H.~Lebesgue.
\newblock Le\c{c}ons sur l'int\'egration et la recherche des fonctions primitives.
\newblock {\em Gauthier-Villars, Paris}, pages 44--45, 1904.

\bibitem{b_lindenmayer}
A.~Lindenmayer.
\newblock Mathematical models for cellular interactions in development i. filaments with one-sided inputs.
\newblock {\em Journal of theoretical biology}, 18(3):280--299, 1968.

\bibitem{b_Mazurkiewicz_1}
S.~Mazurkiewicz.
\newblock O arytmetyzacji kontinuow.
\newblock {\em Sc. de Varsovie}, 6:305--311, 1913.

\bibitem{b_Mazurkiewicz_2}
S.~Mazurkiewicz.
\newblock O arytmetyzacji kontinuow ii.
\newblock {\em Sc. de Varsovie}, 6:941--945, 1913.

\bibitem{b_Mazurkiewicz_3}
S.~Mazurkiewicz.
\newblock Sur les lignes de jordan.
\newblock {\em Fundamenta Mathematicae}, 1:166--209, 1913.

\bibitem{b_moore}
E.~H. Moore.
\newblock On certain crinkly curves.
\newblock {\em Transactions of the American Mathematical Society}, 1:72--90, 1900.

\bibitem{b_netto}
E.~Netto.
\newblock Beitrag zur mannigfaltigkeitslehre.
\newblock {\em Journal f{\"u}r die reine und angewandte Mathematik (Crelles Journal)}, 86:263--268, 1879.

\bibitem{b_osgood}
W.~F. Osgood.
\newblock A jordan curve of positive area.
\newblock {\em Transactions of the American Mathematical Society}, 4:107--112, 1903.

\bibitem{b_ozkaraca}
M.~{\.I}. {\"O}zkaraca.
\newblock {\em An application of space filling curves to substitution tilings}.
\newblock PhD thesis, University of Glasgow, 2021.

\bibitem{ozkaraca_Lebesgue}
M.~{\.I}. {\"O}zkaraca.
\newblock Planar substitutions to lebesgue type space-filling curves and relatively dense fractal-like sets in the plane.
\newblock {\em Journal of Mathematical Analysis and Applications}, 530(2):127654, 2024.

\bibitem{b_peano}
G.~Peano.
\newblock Sur une courbe, qui remplit toute une aire plane.
\newblock {\em Math. Ann.}, 36:157--160, 1890.

\bibitem{b_polya}
G.~Polya.
\newblock Uber eine peanosche kurve.
\newblock {\em Bull. Acad. Sci. Cracovie (Sci. math. et nat. S{\'e}rie A)}, pages 305--313, 1913.

\bibitem{b_rao_zang_1}
H.~Rao and S.-Q. Zhang.
\newblock Space-filling curves of self-similar sets (i): iterated function systems with order structures.
\newblock {\em Nonlinearity}, 29(7):2112, 2016.

\bibitem{b_rao_zang_3}
H.~Rao and S.-Q. Zhang.
\newblock Space-filling curves of self-similar sets (iii): Skeletons.
\newblock {\em Fractals}, 28(02):2050028, 2020.

\bibitem{sadun2008topology}
L.~Sadun.
\newblock {\em Topology of tiling spaces}, volume~46.
\newblock American Mathematical Soc., 2008.

\bibitem{Journal_Sagan_1}
H.~Sagan.
\newblock Approximating polygons for lebesgue's and schoenberg's space filling curves.
\newblock {\em The American Mathematical Monthly}, 93(5):361--368, 1986.

\bibitem{Journal_Sagan_2}
H.~Sagan.
\newblock A geometrization of lebesgue’s space-filling curve.
\newblock {\em The Mathematical Intelligencer}, 15(4):37--43, 1993.

\bibitem{sagan}
H.~Sagan.
\newblock {\em Space-filling curves}.
\newblock Springer-Verlag, Berlin Heidelberg New York, 1994.

\bibitem{b_schoenberg}
I.~Schoenberg.
\newblock On the peano curve of lebesgue.
\newblock {\em Bull. Amer. Math. Soc}, 44(8):519, 1938.

\bibitem{b_sierpinski}
W.~Sierpinski.
\newblock Sur une nouvelle courbe continue qui remplit toute une aire plane.
\newblock {\em Bull. Acad. Sci. Cracovie (Sci. math. et nat. Serie A)}, pages 462--478, 1912.

\bibitem{b_willard_top_book}
S.~Willard.
\newblock {\em General Topology (Dover, New York, 2004)}.
\newblock Originally Published by Addison-Wesley Reading, MA, 1970.

\bibitem{wunderlich1973peano}
W.~Wunderlich.
\newblock {\"U}ber peano-kurven.
\newblock {\em Elemente der Mathematik}, 28:1--10, 1973.

\end{thebibliography}
\Addresses
\end{document}